\newtheorem{thm}{Theorem}
\newtheorem{lem}{Lemma}
\newtheorem{prop}{Proposition}
\newtheorem{cor}{Corollary}
\newtheorem{dfn}{Definition}
\newtheorem{exm}{Example}
\newtheorem{rem}{Remark}
\pgfplotsset{compat=newest}
\tikzstyle{line} = [ draw, -latex']  
\newcommand{\cH}{\mathcal{H}}
\newcommand{\cB}{\mathcal{B}}
\def \PP{ {\mathcal{P}}}
\def \SS{ {\mathcal{S}}}
\def \QQ{ {\mathcal{Q}}}
\def \PP{ {\mathcal{P}}}
\def \rr{ {\mathbb{R}}}
\def \zz{ {\mathbb{Z}}}
\def \qq{ {\mathbb{Q}}}
\def \zr{ {\mathbb{Z}^{n_1}\times\mathbb{R}^{n_2}} }
\def \rhs{b}
\def \GC{ {\textup{{\bf K}}} }
\def \IG{ {\textup{{\bf IG}}} }
\def \conv{\textup{conv}}
\def \cone{ {\textup{cone}}}
\def \int{ {\textup{int}}}
\def \rec{\textup{rec.cone}}
\def \ri{\textup{ri}}
\def \bd{ {\partial}}
\def \ls{\textup{lin.space}}
\def \tq{{\,:\,}}
\def \aff{{\textup{aff}}}
\def \valcp{{\vartheta}}
\def \valcip{{\vartheta_{MIP}}}
\def \MM{\mathbb{M}}
\def \RR{\mathbb{R}}
\newcommand{\ConeParameter}{Wideness parameter }
\newcommand{\coneParameter}{wideness parameter }
\newcommand{\coneParameterNoSpace}{wideness parameter}
\newcommand \matriz[1]{{{#1}}}
\title{On the integrality gap of convex mixed-integer programs} 
\author{
	Burak Kocuk\thanks{burakkocuk@sabanciuniv.edu, Industrial Engineering Program,  Sabanc{\i} University, Istanbul, Turkey 34956, ORCID 0000-0002-4218-1116} \and
	Diego A. Mor{\'{a}}n Ram\'irez\thanks{morand@rpi.edu, Industrial and Systems Engineering Department,  Rensselaer Polytechnic Institute, Troy, NY USA 12180-3590, ORCID 0000-0002-2510-0433.}%,  \url{https://orcid.org/0000-0002-2510-0433}} 
}
\date{} % Activate to display a given date or no date (if empty),
\def \PP{ {\mathcal{P}}}
\def \SS{ {\mathcal{S}}}
\def \rr{ {\mathbb{R}}}
\def \zz{ {\mathbb{Z}}}
\def \qq{ {\mathbb{Q}}}
\def \GC{ {\textup{{\bf K}}} }
\def \conv{\textup{conv}}
\def \cone{ {\textup{cone}}}
\def \int{ {\textup{int}}}
\def \rec{\textup{rec}}
\def \ri{\textup{ri}}
\def \bd{ {\partial}}
\def \ls{\textup{lin}}
\def \tq{{\,:\,}}
\def \aff{{\textup{aff}}}
\def \MM{\mathbb{M}}
\def \RR{\mathbb{R}}
\begin{document}
\maketitle

\begin{abstract}
We study the {\em integrality gap} of convex mixed-integer programs, that is, the difference between the optimal value of such a problem and the optimal value of its continuous relaxation. We study classes of convex sets whose associated optimization problem have finite integrality gap: Dirichlet convex sets, sets with full-dimensional recession cones and sets that can be approximated by polyhedral sets. In the latter two cases, we provide estimates for the value of the integrality gap. Finally, we study the possibility of estimating the integrality gap of nonlinear convex mixed-integer programs via rational polyhedral approximations of their feasible regions and argue that, in general, such an approach may yield arbitrarily worse bounds compared to integrality gap estimations specifically derived by studying the associated nonlinear set. 

% We provide sufficient conditions the feasible region may satisfy so that it is a Dirichlet convex set, a class of sets for which the integrality gap is known to be finite. We also give integrality gap estimates when the feasible region has a full-dimensional recession cone or it is \textit{almost rational polyhedral} under mild conditions. Finally, we study the possibility of estimating the integrality gap via rational polyhedral approximations and argue that such an approach may yield arbitrarily worse bounds for the two-dimensional Euclidean ball compared to the integrality gap estimate we derive.  %we prove that when the feasible region is Dirichlet convex or its recession cone is full-dimensional
%We show that this quantity can be upper bounded in terms of the recession cone of the feasible region of the continuous relaxation when the recession cone is full-dimensional, closed and convex. If the recession cone does not satisfy these properties, we give sufficient conditions to obtain a finite integrality gap. Although our results are presented for the integer lattice $\mathbb{Z}^n$, the bounds can be easily adapted to work for any general lattice, including the usual mixed-integer lattice $\mathbb{Z}^{n_1}\times\mathbb{R}^{n_2}$, by considering the appropriate covering radius when needed.
%\BK{Please provide an abstract of 150 to 250 words. [currently 98]}
\end{abstract}
\noindent{\bf Keywords:}
    mixed-integer programming; convex programming; conic programming; integrality gap

\section{Introduction}
\subsection{Integrality gaps of convex mixed-integer programs}
 A \textit{convex mixed-integer program} (MIP) is a problem of the form
\begin{equation}
\valcip(\SS):=\inf\{\alpha^Tx\tq x\in\SS\cap(\zr)\}, \label{eq:convexIP}
\end{equation}
\noindent where $n$ is a positive integer, $n_1,n_2$ are nonnegative integers with $n=n_1+n_2$, $\SS\subseteq\rr^n$ is a nonempty convex set and  $\alpha\in\rr^n$. If all variables are constrained to be integer, that is, $n_2=0$, we say that~\eqref{eq:convexIP} is  a {\em convex integer program} (IP).

Convex MIPs have a wide range of applications (e.g., in location and inventory management \cite{Atamturk12,Kocuk2021}, 
 power distribution systems \cite{Kocuk2015,kayacik2020misocp,tuncer2022misocp}, options pricing  \cite{Pinar13}, engineering design \cite{Dai} and 
 Euclidean $k$-center problems \cite{Brandenberg}).  
% but they are  hard to solve in general. 
In general, solving problem~\eqref{eq:convexIP} is a challenging task, however, one can obtain a lower bound on the objective function value of~\eqref{eq:convexIP} by solving the so-called  {\em continuous relaxation} 
\begin{equation}
\valcp(\SS):=\inf\{\alpha^Tx\tq x\in\SS\}, \label{eq:convexP}
\end{equation}
 which is obtained after relaxing the integrality constraints in  \eqref{eq:convexIP}. The continuous relaxation~\eqref{eq:convexP} can be efficiently solved as a convex program under mild conditions. The {\em integrality gap} of~\eqref{eq:convexIP} is
 defined as\footnote{In order to avoid the trivial case when  $\valcip(\SS)=\valcp(\SS)=-\infty$ (in which case we define $\IG(\SS)=0$), we assume for the rest of the paper that problem~\eqref{eq:convexIP} has a finite optimal value, that is, $\valcip(\SS)>-\infty$.}
$$\IG(\SS) = \valcip(\SS)-\valcp(\SS),$$
\noindent and can be interpreted as a measure of the quality of the relaxation~\eqref{eq:convexP}. Estimates for the integrality gap can be used to evaluate integer programming models (see, for instance,~\cite{temitayoOR2020}) and devise algorithms to solve some classes of integer programs (see, for instance,~\cite{IG_facilitylocation2017, IG_network_cover2000, IGscheduling2011}).

In linear mixed-integer programming, that is, when $\SS$ is a rational polyhedron, %=\{x\in\rr^n\tq \matriz{A}x\geq\rhs\}$  for a $m\times n$ matrix $\matriz{A}$ defined by integral data, 
the most prominent proximity result is an upper bound on the distance between optimal solutions of the linear integer-program~\eqref{eq:convexIP} and its linear programming relaxation~\eqref{eq:convexP} in terms of the determinants of the square submatrices of the matrix defining the rational polyhedron  (see~\cite{blair1977value,blair1979valueII, cook1986sensitivity, paat2020distances}). In order to formally state this proximity result we need some notation. For an $m\times n$ matrix $A$ and $k\leq \min\{m,n\}$, let $\Delta_k(A)$ be the largest absolute value of a $k\times k$ submatrix of~$A$.  

\begin{thm}[\cite{cook1986sensitivity, paat2020distances}]\label{thm:cook1986sensitivity}
Consider the rational polyhedron $\SS=\{x\in\rr^n\tq \matriz{A}x\geq  \rhs\}$ with  $\matriz{A} \in \zz^{m\times n}$. Assume that $\SS\cap\zz^n\neq\emptyset$ and that $\valcp(\SS)>-\infty$. Then for each optimal solution $\hat x$ of \eqref{eq:convexP} (resp. optimal solution $x^*$ of \eqref{eq:convexIP}), there exists an optimal solution $x^*$ of \eqref{eq:convexIP} (resp. optimal solution $\hat x$ of \eqref{eq:convexP}) such that 
    $$\|\hat x-x^*\|_\infty\leq n_1 \Delta_{n-1}(A).$$
\end{thm}
% \begin{thm}[\cite{cook1986sensitivity}]\label{thm:cook1986sensitivity}
% Consider the rational polyhedron $\SS=\{x\in\rr^n\tq \matriz{A}x\geq  \rhs\}$ with  $\matriz{A} \in \zz^{m\times n}$. Assume that $\SS\cap\zz^n\neq\emptyset$ and that $\valcp(\rhs)>-\infty$. Then,
% \begin{enumerate}
%     \item For each optimal solution $\hat x$ of \eqref{eq:convexP}, there exists an optimal solution $x^*$ of \eqref{eq:convexIP} with $\|\hat x-x^*\|_\infty\leq n \Delta_{n-1}(A)$.%{\color{red}\chi(A)}$. % n\Delta$.
%     \item For each optimal solution $x^*$ of \eqref{eq:convexIP}, there exists an optimal solution $\bar x$ of \eqref{eq:convexP} with $\|x^*-\hat x\|_\infty\leq n \Delta_{n-1}(A)$.%{\color{red}\chi(A)}$. % n\Delta$.
% \end{enumerate}
% \end{thm}

 As a consequence of Theorem~\ref{thm:cook1986sensitivity}, the {\em integrality gap} of the mixed-integer program~\eqref{eq:convexIP} (with $\SS=\{x\in\rr^n\tq \matriz{A}x\geq  \rhs\}$) can be bounded above by a constant that is independent of the right-hand side $\rhs$ as 
\begin{equation}\label{eq:CookIGbound}
    \IG(\SS)\leq \|\alpha \|_1 n_1 \Delta_{n-1}(A).
\end{equation}
The bound in Theorem~\ref{thm:cook1986sensitivity} has been improved to $\|\hat x-x^*\|_\infty\leq \frac{4n+2}{9}\Delta_{n-1}(A)$ in~\cite{celaya2024proximity} for IPs with $n\ge2$, an integral r.h.s. vector  $b\in\zz^m$, an objective function defined by $\alpha\in\qq^n$, and $\hat x$ being an  optimal vertex of the associated continuous relaxation~\eqref{eq:convexP}. 
%
% Related results have been obtained for linear mixed-integer programs (MIPs) where the bounds are shown to depend on the number integer variables~\cite{paat2020distances} and the following bound is obtained \DM{Check if we can consolidate this into Theorem 1}
% \begin{equation}\label{PaatIGbound}
%     \IG(\SS)\leq \|\alpha \|_1 n_1 \Delta_{n}(A).
% \end{equation}
% %
Recent work focus on integrality gap calculations for integer knapsack problems \cite{aliev2017integrality,karlin2011integrality} and random  linear IPs~\cite{borst2023integrality} and other related results such as using other norms to compute the distance between solutions~\cite{eisenbrand2019proximity}, obtaining sharper bounds for $A$ with row rank and $\Delta$-modularity~\cite{koeppeIGbound2026} and  improving proximity bounds using sparsity~\cite{lee2020improving}.

In nonlinear mixed-integer programming, proximity analysis is limited and, for the most part, the focus is on optimization problems with convex separable objective functions and a feasible region defined by a non-empty rational polyhedron ~\cite{hochbaum1990convex,granot1990some, werman1991relationship}. This setting can be put in the form~\eqref{eq:convexIP} through an epigraph formulation as follows: If $f(x)$ is the nonlinear function, the epigraph formulation of $\min\{f(x)\tq x\in\SS\cap(\zr)\}$ is $\min\{t\tq f(x)\leq t,\ x\in\SS\cap(\zr)\}$. We note here that the results in~\cite{hochbaum1990convex,granot1990some, werman1991relationship} explicitly use properties of
the polyhedral structure of $\SS$ in a similar manner to~\cite{cook1986sensitivity}; these techniques do not carry true to the case of general convex set $\SS$. Also see~\cite{del2022proximity}
for proximity results for an optimization problem with a concave quadratic minimization objective and~\cite{stein2016error}, where error bounds for optimality and feasibility are computed based on grid-relaxations of the feasible region. 
%\BK{Say something about the results?}
%The main results in this direction involve minimizing a convex separable  function~\cite{hochbaum1990convex,werman1991relationship} or a convex separable quadratic function~\cite{granot1990some}.% subject to linear constraints.

For a general convex set $\SS$, it could be the case that $\valcip(\SS)>-\infty$ but $\valcp(\SS)=-\infty$, so that, $\IG(\SS)=+\infty$. This situation can occur even if the set $\SS$ is a polyhedral set (of course, by Theorem~\ref{thm:cook1986sensitivity}, this cannot be a polyhedron defined by rational data).
%
% {\color{red}This could happen even when the matrix $\matriz{A}$ and vector $\rhs$ are defined by integral data, as the issue is typically caused by an inherent irrationality of the extreme rays of the cone $\GC$.}
%
%\begin{exm}[Cone generated by an ellipsoid]
%Let $\GC=\cone(\{x\in\rr^3\tq (x_1-1)^2+(x_2-\sqrt{2})^2+(x_3-1)^2\leq 1\})$, $\matriz{A}=\begin{bmatrix}1&0\\0&1\\0&0\end{bmatrix}$, $b=\begin{pmatrix}0\\0\\0\end{pmatrix}$, and $c=(-1,-1)$. Then $\SS_b=\{x\in\rr^2\tq (x_1,x_2,0)\in \GC\}=\{x\in\rr^2\tq x=\lambda(1,\sqrt{2}),\ \text{for some}\ \lambda\geq0\}$, and therefore, $\valcp(b)=-\infty$, $\valcip(b)=0$ and thus $\IG(b)=+\infty$.
%\end{exm}

\begin{exm}\label{SOCr}%[A conic representable set with infinite integrality gap]
Consider the set $\SS=\{x\in\rr^2\tq x=\lambda(1,\sqrt{2}),\ \text{for some}\ \lambda\geq0\}$ which is a ray with irrational slope emanating from the origin. Clearly, the optimal value of the integer program~\eqref{eq:convexIP} is $\inf\{-x_1-x_2\tq x\in\SS\cap\zz^2\}=0$ and the optimal value of its continuous relaxation~\eqref{eq:convexP} is $\inf\{-x_1-x_2\tq x\in\SS\}=-\infty$, and  thus $\IG(\SS)=+\infty$.
% {\color{red}[is this explanation needed?] 
% Note that $\SS$ can be represented as a conic set of the form
% $$\SS=\left\{x\in\rr^2\tq \begin{bmatrix}1&0\\0&1\\0&0\end{bmatrix}x\ge_\GC \begin{pmatrix}0\\0\\0\end{pmatrix}\right\}$$
% \noindent either by using the polyhedral cone $\GC=\cone(\{(1,\sqrt{2},0); (1,0,1); (0,1,1)\})$ or by using the cone $\GC=\cone(\{x\in\rr^3\tq (x_1-1)^2+(x_2-\sqrt{2})^2+(x_3-1)^2\leq 1\})$ which is generated by an ellipsoid. 
% }
\end{exm}
%
% Example~\ref{SOCr} above is in stark contrast with the  linear IP case, that is, $\GC=\rr_+^m$. Cook et al. \cite{cook1986sensitivity} showed that there exist solutions to the linear IP and to its continuous relaxation whose infinity-norm distance can be bounded above by a constant only depending on the number of variables and the matrix $\matriz{A}$. 

Unfortunately, even if we restrict ourselves to (general) conic IPs defined by rational data, the integrality gap might still be infinitely large.

%\DM{Move this example to Section 2. Perhaps only $\conv(B)$ and here we can continue the example mentioning the conic representability of it.  Maybe Burak wants to write it explicitly?}

\begin{exm}[\cite{Moran}]%[Another conic representable set with infinite integrality gap]
\label{exSOCr} It can be shown that the following set can be represented as a second-order conic set~\cite{BenTal01}:
\begin{align*}
\SS &= \conv ( \{x\in \rr^3\tq x_3 =0, x_1 = 0, x_2 \geq 0 \}  \cup  \{x \in \rr^3\tq x_3 = 1/2, x_2 \geq x_1^2 \}  \cup \{x \in \rr^3\tq x_3 =1, x_1 = 0, x_2 \geq 0 \}).
\end{align*}
%
% It can be shown that the following is a second-order conic representation (SOCr) of the set 
% $\SS $ in an extended space:
% %We provide an SOCr of the set $B$ in an extended space for completeness. \BK{should we simply omit this? is it really needed?}
% %
% \begin{equation*}\label{eq:extended-form}
%     \begin{aligned}
%   &  x_{3,0} = 0, x_{1,0}= 0 , x_{2,0} \ge 0 ; \ \ x_{3,\epsilon}=\epsilon \lambda_{\epsilon}, 
%   \begin{bmatrix}  2x_{1,\epsilon} \\ x_{2,\epsilon} \\ x_{2,\epsilon} \end{bmatrix} \ge_{\LC^{3}} \begin{bmatrix} 0 \\ \lambda_{\epsilon} \\ -\lambda_{\epsilon} \end{bmatrix},     x_{2,\epsilon} \ge 0 ; \\
%  & x_{3,1} = \lambda_1, x_{1,1}= 0 , x_{2,1} \ge 0 ; \\
%   & x_{j} = x_{j,0} + x_{j,\epsilon} + x_{j,1} \  j=1,2,3 ; \ \ \lambda_0 + \lambda_{\epsilon} + \lambda_1 = 1, \lambda_0 , \lambda_{\epsilon} ,  \lambda_1 \ge 0.
% \end{aligned}
% \end{equation*}
%there exists $\matriz{A}_i \in \rr^{m_i \times n}$, $G_i \in \rr^{m_i \times n_i}$, and $b_i\in\rr^{m_i}$ defined by rational data such that $B$ has the representation below, thus, it is a second-order cone representable (SOCr) set:  
%$$B=\{x\in\rr^n\tq\exists y_i\in\rr^{n_i}\ \text{s.t.}\ \matriz{A}_ix+G_iy_i\ge_{\LC^{m_i}}b_i\ \text{for all}\ i=1,\ldots,p\}.$$
%
The optimal value of the integer program~\eqref{eq:convexIP} is $\inf \{x_1\tq x \in \SS\cap\zz^3\}=0>-\infty$ and the optimal value of its continuous relaxation~\eqref{eq:convexP} is  $\inf \{x_1\tq x \in \SS\} = - \infty$. Therefore, we obtain that $\IG(\SS)=+\infty$. % for $u=(b_1,\ldots,b_p)$. 
\end{exm}

\subsection{Our results}
 In this paper, we study the integrality gap  of problem~\eqref{eq:convexIP} for a general convex set  $\SS$. Our main contributions are as follows.

\begin{itemize}
\item
{\bf Finiteness of the integrality gap} (Section~\ref{sec:finiteness_of_IG}): 
 %
 % In general, the integrality gap for a convex IP may be infinite (recall  Example~\ref{SOCr}). 
%
We study classes of convex sets whose associated optimization problems have finite integrality gap: Dirichlet convex sets, sets with full-dimensional recession cones and sets that can be approximated by polyhedral sets. We establish that the integrality gap of convex MIPs defined by \textit{Dirichlet convex} sets is finite (see Proposition~\ref{prop:Dirichlet_IG}) and give sufficient conditions for a set to be a Dirichlet convex set (see Proposition~\ref{prop:preimage_Dirichlet_convex_sets} and Corollary~\ref{cor:conic_sets}). %  and provide some estimations (upper bounds) when this is the case.
%
%\item 
%{\bf Integrality gap estimations}:
We also obtain integrality gap estimations for certain convex sets. For instance, utilizing the \textit{wideness parameter} (see Definition~\ref{dfn:fatnessParameterCone}), we provide an upper bound on the integrality gap when the recession cone of the convex set $\SS$ is full-dimensional (see  Theorem~\ref{thm:proximityInFullDimCone}). Moreover, under a technical condition, we also derive an upper bound  on the integrality gap  when the convex set $\SS$ is \textit{almost rational polyhedral} (see Definition~\ref{dfn:almostrationalpoly}) by using Theorem~\ref{thm:cook1986sensitivity}.
\item 
{\bf Integrality gap via polyhedral approximations} (Section~\ref{sec:intGapViaPolyApprox}): We study the possibility of estimating the integrality gap of~\eqref{eq:convexIP} by approximating its feasible region by a rational polyhedron and then using the well-known integrality gap results from the linear MIP literature. We show that this approach may have several pitfalls: First, one may need exponentially many linear constraints in order to approximate the convex set, and these constraints must be defined by rational data in order to be able to use the known integrality gap results, and second, the bound obtained by using Theorem~\ref{thm:cook1986sensitivity} and the polyhedral approximation can be arbitrarily far off. Moreover, even if the polyhedral approximation is chosen to give the best integrality gap bound, this bound can be arbitrarily bad (see Theorem~\ref{thm:noBoundedCoeffPolytopeCanApproxBall}). We illustrate the above issues in the case of the convex set being a ball centered at the origin.
\end{itemize}

\subsection{Notation and basic facts}
%\BK{Clean up this section}
 %In this section, we will properly define some concepts mentioned above and introduce the notation we use throughout the paper.

%{\color{red}need to introduce the necessary ones from this list:  $\rr^n$,
%$\rr^n_+$, $\rr^n_{++}$, $\zz^n$, $\zz^n_+$, $\zz^n_{++}$ %, $\mathbb{S}^n$, $\mathbb{S}^n_+$, $\mathbb{S}^n_{++}$ and $\succeq$, $\succ$

%We will use the Euclidean norm $\|\cdot\|_2$ as the norm in the definition of proximity unless otherwise stated. 
For a set $X\subseteq \rr^n$, we denote its interior as $\int(X)$, its boundary as $\bd X$, its convex hull as $\conv(X)$, and its conic hull as $\cone(X)$. The {\em integer hull} of $X$ is the convex hull of the integer points contained in the set $X$, namely $\conv(X\cap \zz^n)$. For a convex set $\SS$ its recession cone is the set 
$\rec(\SS)=\{d\in \rr^n\tq x+\lambda d\in \SS\ \text{for all}\ \lambda\in \rr_+,\ x\in \SS\}$, and its lineality space is the set $\ls(\SS)=\{d\in \rr^n\tq x+\lambda d\in \SS\ \text{for all}\ \lambda\in \rr,\ x\in \SS\}$.
For $i=1,\ldots,n$, we denote $e_i$ as the $i$-th vector in the canonical basis of $\rr^n$, that is, $e_i$ is the vector with a 1 in the $i$-th component and zeros otherwise. 
The ball centered at $x_0\in\rr^n$ with radius $r>0$ is denoted by 
$B(x_0,r) = \{x\in\mathbb{R}^n \tq \|x-x_0\|_2 \le r \}$.

%\subsection{Conic programming}

%In addition to general convex IPs, we also study conic IPs. For this reason, we now review some preliminaries related to cones.
%
{A cone $\GC\subseteq\rr^m$ is called a regular cone if it is full-dimensional, closed, convex, and pointed (it does not contain lines).
The dual cone of a cone  $\GC$ is defined as $\GC_*=\{x\in\rr^n\tq x^Ty\geq0\ \text{for all}\ y\in \GC\}$. For a closed convex cone, it can be shown that $\GC=(\GC_*)_*$. %, we will use this fact below. 
}

 % In addition to general convex IPs, we also study conic IPs, whose feasible region involves a conic representable set $\SS=\{x\in\rr^n\tq \matriz{A}x+\matriz{G}u\ge_{\GC} \rhs\ \textup{for some}\ u\in\rr^p\}$, where $\GC$ is a  regular cone, $A$ is an $n\times m$ matrix, $G$ is an $n\times p$ matrix. Here, the conic inequality $\matriz{A}x\ge_{\GC} \rhs$ is defined as $\matriz{A}x - \rhs \in {\GC}$. Important examples of conic IPs include linear IPs ($\GC$ is the nonnegative orthant), second-order conic IPs ($\GC$ is the Lorentz cone) and semidefinite IPs ($\GC$ is the cone of symmetric positive semidefinite  matrices).

A general mixed-integer lattice~\cite{Bertsimas_Weismantel_2005}
is a set of the form $\MM=\{x \in \rr^m \,:\, x=\matriz{E}z+\matriz{F}y,\ z\in \zz^{n_1}, y\in \rr^{n_2}\}$ for $\matriz{E}\in\qq^{m\times n_{1}}$ and $\matriz{F}\in\qq^{m\times n_{2}}$, where the columns of $\matriz{E}$ are in the orthogonal subspace to the linear subspace generated by the columns of $\matriz{F}$ (this representation is without loss of generality, see, for instance, Proposition~3.11 in~\cite{DM2016}).

The {\em covering radius} of a general mixed-integer lattice is the minimum $\rho$ such that $B(0,\rho) +\MM=\rr^m$. From this definition, it follows that if $\mu(\MM)$ is the covering radius of $\MM$, then for all $\tilde x\in\rr^n$, the ball $B(\tilde x, \mu(\MM))$ contains a point in $\MM$. It is easy to see that for the mixed-integer lattice $\zz^{n_1}\times \rr^{n_2}$, its covering radius is $\mu(\zz^{n_1}\times \rr^{n_2})=\frac{\sqrt{n_1}}2$.

For simplicity, we only consider convex mixed-integer programs in this paper.  However,  the results in this paper can be easily extended to general mixed-integer lattices by noting  that $\MM=\begin{bmatrix}\matriz{E} & \matriz{F}\end{bmatrix}(\zz^{n_1}\times \rr^{n_2})$ and that linear mappings preserve convexity and related properties.

\section{The integrality gap of general convex MIPs}\label{sec:finiteness_of_IG}
% \DM{Maybe for each new theorem in this section we can add an illustrative example, to showcase the power of the results?}\BK{We will add $x_1x_2\geq 1,\ x_1\geq0$ -- See Example~\ref{ex:hyperbolaExample} -- }

%
%

\subsection{Convex MIPs with finite integrality gap}
\label{sec:DirichletConvCase}

Note that a convex MIP defined by a convex set $\SS\subseteq\rr^n$ has finite integrality gap if and only if the following property holds: $\valcip(\SS)>-\infty$ implies that $\valcp(\SS)>-\infty$. This property has been called the `finiteness property' in~\cite{Moran,kocuk2019subadditive}. The following remark relates the finiteness property with having a finite integrality gap.

\begin{rem}\label{rem:fineteness_prop}
$\IG(\SS)<+\infty$  if and only if $\SS$ satisfies the `finiteness property'.
\end{rem}

Conditions for a set to satisfy the finiteness property have been studied in~\cite{DM2011,Moran,kocuk2019subadditive} for Dirichlet convex sets. A convex set $\SS\subseteq \rr^n$ is said to be a {\em Dirichlet convex set} with respect to $\zr$ if for all $z\in \SS\cap(\zr)$, $r\in \rec(\SS)$, $\epsilon>0$ and $\gamma\geq0$, there exists a point $w\in \SS\cap(\zr)$ at a (Euclidean) distance less than $\epsilon$ from the half-line $\{z + \lambda r\tq \lambda \geq \gamma\}$. Examples of  Dirichlet convex sets are: bounded convex sets, rational polyhedra, closed strictly convex sets, and closed
convex sets whose recession cones are generated by integral vectors (see Corollary 4.6 in~\cite{kocuk2019subadditive}). The following result, a direct consequence from Theorem 2.12 in~\cite{kocuk2019subadditive},  establishes that the integrality gap is finite for convex MIPs whose feasible regions are Dirichlet convex sets.

\begin{prop}\label{prop:Dirichlet_IG}
    Let $P\subseteq \rr^n$ be a {\em Dirichlet convex set}  with respect to $\zr$ and let  $X\subseteq\rr^n$ be a convex set such that $P\cap\int(X)\cap(\zr)\neq \emptyset$. Define $\SS=P\cap X$ and assume that $\valcip(\SS)>-\infty$. Then $\IG(\SS)<+\infty$.
\end{prop}

% {\color{red}if we approximate with a polyhedral outer-approximation, we may add new integer points, completely messing up with the integrality gap, etc.}

The following result is useful as a sufficient condition for a convex set to satisfy the Dirichlet property.
\begin{prop}\label{prop:preimage_Dirichlet_convex_sets}
Let $T$ be an injective linear mapping and $\MM$ be  a mixed-integer lattice. Let $X\subseteq\rr^m$ be a Dirichlet convex set w.r.t. $T(\MM)$. Then $T^{-1}(X)$ is a Dirichlet convex set w.r.t.  $\MM$. 
\end{prop}
\begin{proof}
First, by Corollary~8.3.4 in~\cite{rockafellar1970}, we have that $\rec(T^{-1}(X))=T^{-1}(\rec(X))$. Hence, for any $r\in \rec(T^{-1}(X))$, there exists $s\in\rec(X)$ such that $s=T(r)$. Second, since $T$ is injective, we have $T(T^{-1}(\MM))=\MM$. Moreover, since $T(x)\neq 0$ for any $x\neq 0$, we have that $\kappa_T:=\min \frac{\|T(x)\|}{\|x\|}>0$, which implies $\|T(x)\|\geq \kappa_T \|x\|$ for all $x\in\rr^n$.

We must show that for any $\bar x\in T^{-1}(X)\cap \MM$, $\bar r\in \rec(T^{-1}(X))$, $\epsilon>0$ and $\gamma\geq 0$,  there exists $\hat x\in T^{-1}(X)\cap \MM$ at distance less than $\epsilon$ from the half-line $\{\bar x + \lambda \bar r\tq \lambda \geq \gamma\}$.

Since $\bar x\in T^{-1}(X)$ and $\bar r\in \rec(T^{-1}(X))=T^{-1}(\rec(X))$,  we have that there exist $\bar y \in X$ and $\bar s \in \rec(X)$ such that $\bar y=T(\bar x)$ and $\bar s=T(\bar r)$. Moreover, since $\bar x \in \MM$, we have that $\bar y \in X\cap T(\MM)$. 

Now, since $X\subseteq\rr^m$ is a Dirichlet convex set w.r.t. $T(\MM)$, there exists $\hat y\in  X\cap T(\MM)$ at distance less than $\kappa_T\epsilon$ from the half-line $\{\bar y + \lambda \bar s\tq \lambda \geq \gamma\}$, that is, there exists, $\hat \lambda\geq \gamma$ such that 
$$\|\hat y-(\bar y+\hat \lambda \bar s)\|\leq \kappa_T \epsilon.$$ 
Since $\hat y \in X\cap T(\MM)$ and $T^{-1}(X\cap T(\MM))=T^{-1}(X)\cap T^{-1}(T(\MM))=T^{-1}(X)\cap \MM$, there exists $\hat x \in T^{-1}(X)\cap \MM$ such that $\hat y =T(\hat x)$. We obtain
$$\|\hat x-(\bar x+\hat \lambda \bar r)\|\leq\frac{1}{\kappa_T}
   \|T(\hat x)-(T(\bar x)+\hat \lambda T(\bar r))\|
   =\frac{1}{\kappa_T}
   \|\hat y-(\bar y+\lambda \bar s)\|
   \le \epsilon,$$   
% \begin{align*}
%    \|\hat x-(\bar x+\hat \lambda \bar r)\|&\leq\frac{1}{\kappa_T}
%    \|T(\hat x)-(T(\bar x)+\hat \lambda T(\bar r))\|\\ 
%    &=\frac{1}{\kappa_T}
%    \|\hat y-(\bar y+\lambda \bar s)\|\\
%    &=\epsilon,   
% \end{align*}
\noindent which shows that $T^{-1}(X)$ is a Dirichlet convex set w.r.t. $\MM$.
\end{proof}
\begin{cor}\label{cor:conic_sets}
    Let $\GC$ be a regular cone, $A$ be an $m\times n$ integral matrix and $b$ be an integral vector. Consider the set $\SS=\{x\in\rr^n\tq Ax-b\in \GC\}$, which is the feasible region of a conic programming problem defined by the cone $\GC$. Let $T(x)=Ax$ and assume $T$ is injective. Then if $(\GC+b)\cap T(\rr^n)$ is a Dirichlet convex set w.r.t. $\zz^n$, then $\SS$ is also  Dirichlet convex set w.r.t. $\zz^n$.  
\end{cor}

As an application of Corollary~\ref{cor:conic_sets}, if all faces of the cone $\GC$ are of dimension 1, then the set $(\GC+b)$ is either a (translated) cone or a strictly convex set (see Lemma 3.10 in \cite{DM2016}). Thus, by Corollary 4.6 in~\cite{kocuk2019subadditive} $(\GC+b)$ is a Dirichlet convex set w.r.t. $\zz^n$ except in the case it is a cone with at least one extreme ray that cannot be scaled to be an integral vector.

%In the following sections, we give sufficient conditions for the integrality gap to be finite and provide some estimations (upper bounds) for its value whenever possible.

\subsection{Convex MIPs defined by sets with full-dimensional recession cones}
\label{sec:fullDimRecConeCase}
%

% \begin{lem}\label{lem:ballInFullDimCone}
%     Let $\GC\subseteq \mathbb{R}^n$ be a  regular cone. Define
%     \[
%     \Psi_{\GC, \|\cdot\|} = \sup_{\substack{d\in\int(\GC)\\  \|d\|=1}} \inf_{\substack{ f \in \GC_*\\ \|f\|_*=1}} {   f^T d   } ,
%     \]
%     with an optimal solution of $d^*$.
%     Then, $\Psi_{\GC, \|\cdot\|} > 0 $ and for all $\theta\geq \frac{r}{\Psi_{\GC, \|\cdot\|} }$, we have that $B(\theta d^*,r)\subseteq \GC$. 
% \end{lem}

%%%--%%% \BK{Hoffman's constant? For instance, in error bounds  paper by german guy mention this - I do not see how we can use Hoffman theory in our setting since his setting involves error bounds for systems of (non)linear equations. We do not have have any explicit equations in our optimization model!!! Perhaps this is more meaningful for the SOC paper since we will have explicit quadratic functions there.}

Our main result in this section is that  if the recession cone of $\SS$ is a full-dimensional closed convex cone, then we can estimate the integrality gap of the convex MIP~\eqref{eq:convexIP} via an upper bound. Moreover, this upper bound is independent of the optimal solutions of the continuous relaxation~\eqref{eq:convexP} and of the convex MIP~\eqref{eq:convexIP}.  Next, we define a key parameter of a full-dimensional closed convex cone, which we will use to obtain the upper bound. 

\begin{dfn}[\ConeParameter of a full-dimensional closed convex cone]\label{dfn:fatnessParameterCone}
    For a full-dimensional closed convex cone $\GC\subseteq \mathbb{R}^n$, we define its \coneParameter as
    \[
    \Psi_{\GC, \|\cdot\|} = \max_{d\in \GC}  \left\{  \min_{ f \in \GC_*} \{    f^T d  \tq {  \|f\|_*=1}  \}  \tq {    \|d\|=1} \right\}  ,
    \]
     where  $\|x\|_*=\max\{x^Ty\tq \|y\|=1\}$ is the norm dual to $\|\cdot\|$.
\end{dfn}

% need a lemma that quantifies a key parameter for a regular cone. {\color{red}why is this a key parameter? intution? well-rounded? --- 
% maybe move the definition outside the lemma
% }

\begin{figure}[H]

\begin{subfigure}{0.4\textwidth}
    \centering
\begin{tikzpicture}
   \fill[fill=gray!20] (-2,4)--(0,0)--(2,4);
\draw (-2,4) node[anchor=west, xshift=1.2mm,yshift=-1.4mm]{$\GC_1$}
  --  (0,0) node[anchor=north]{$0$}
  --  (2,4) node[anchor=east]{}
;
\draw [line width=.5mm, dashed,black ] (0,2.683) -- node[above, sloped]{$r=1$}
   (-1.0733,2.1466) node[anchor=south]{}
;    
\draw[fill=none](0,2.683) circle (1.2) node [black,yshift=-1.5cm] {};
\draw [line width=1mm, black ] (0,0) node[anchor=north]{}  --  (0,2.683) node[anchor=north west, yshift=-7.5mm]{}
;
\draw [black, -stealth ] (0,1.3415) node[anchor=north]{}  ->  (1.3415,1.3415) node[anchor= west,yshift=-0.4mm ]{$\frac{1}{\Psi_{\GC_1, \|\cdot\|}}$}
;
\end{tikzpicture}
\caption{A ``narrow" cone $\GC_1$.}
\end{subfigure}
\begin{subfigure}{0.49\textwidth}
    \centering
\begin{tikzpicture}
   \fill[fill=gray!20] (-3,4)--(0,0)--(3,4);
\draw (-3,4) node[anchor=west, xshift=1.2mm,yshift=-1.4mm]{$\GC_2$}
  --  (0,0) node[anchor=north]{$0$}
  --  (3,4) node[anchor=east]{}
;

\draw [line width=.5mm, dashed,black ] (0,2) -- node[above, sloped]{$r=1$}
   (-0.96,1.28) node[anchor=south]{}
;    
\draw[fill=none](0,2) circle (1.2) node [black,yshift=-1.5cm] {};

\draw [line width=1mm, black ] (0,0) node[anchor=north]{}
  --  (0,2) node[anchor=north west, yshift=-7.5mm]{}
;
\draw [black, -stealth ] (0,1) node[anchor=north]{}  ->  (1.3415,1) node[anchor= west,yshift=-0.4mm ]{$\frac{1}{\Psi_{\GC_2, \|\cdot\|}}$}
;
\end{tikzpicture}
\caption{A ``wide" cone $\GC_2$.}
\end{subfigure}
    \caption{A geometric illustration of the \coneParameter using two cones. The reciprocal of the length of the thick line segment is equal to the \coneParameterNoSpace.}
    \label{fig:fatness}
\end{figure}

We provide the intuition behind the \coneParameter by comparing two full-dimensional closed convex cones $\GC_1$ and $\GC_2$ in $\RR^2$ as shown in Figure~\ref{fig:fatness}. Suppose that  we would like to put a unit ball inside these cones that is as close as possible to the origin. As can be seen from Figure~\ref{fig:fatness}, we can put the unit ball in $\GC_2$ closer to the origin compared to $\GC_1$ since $\GC_1$ is ``wider" than  cone $\GC_2$.  In fact, one can show that $\Psi_{\GC_1, \|\cdot\|} < \Psi_{\GC_2, \|\cdot\|}$. Notice that the distance of the center of the unit ball to the origin is inversely related to the \coneParameterNoSpace. Lemma~\ref{lem:ballInFullDimCone} formalizes this intuition.

%\DM{I preferthat the parameter def does not use substacks, this way it looks clearer, like in problem (6). Also in problem (6) $v^Tv\leq 1$ is used in place of $\|v\|\leq 1$.} \BK{like the covering radius, I think it should depend on K but the norm is negotiable :) In (6), the norm is chosen as Euclidean although it is not explicitly stated}
\begin{lem}\label{lem:ballInFullDimCone}
    Let $\GC\subseteq \mathbb{R}^n$ be a full-dimensional closed convex cone and $r>0$. {Consider the \coneParameter of $\GC$ as defined in Definition~\ref{dfn:fatnessParameterCone}.} Then, the maximizer is attained at an interior point $ d^*$ of $\GC$, $\Psi_{\GC, \|\cdot\|} > 0 $ 
    and   $B( \theta d^*, r)\subseteq \GC$ for all $\theta \ge \frac{r}{\Psi_{\GC, \|\cdot\|}}$. 
\end{lem}
% \begin{lem}\label{lem:ballInFullDimCone-old}
%     Let $\GC\subseteq \mathbb{R}^n$ be a  regular cone. Let $d\in\int(\GC)$ such that $\|d\|=1$ and $r>0$. Define
% \[
% \Theta = \frac{r}{\min \{   f^T d  : f \in \GC_*, \|f\|_*=1 \} } .
% \]
% Then, for all $\theta\geq \Theta$ we have that $B(\theta d,r)\subseteq \GC$
% \end{lem}
\begin{proof} In order to prove the assertion of the lemma, we analyze two cases. For the first case, we let $d\in\int(\GC)$ such that $\|d\|=1$ and show that $\theta d + r \epsilon \in \GC$ for every $\epsilon \in \mathbb{R}^n$ such that $\| \epsilon \|  = 1$, where $\theta \ge r / \psi_d$ with $ \psi_d = {\min \{   f^T d  : f \in \GC_*, \|f\|_*=1 \} }$. In fact, we have that
\begin{align*}
   \theta d + r \epsilon  \in \GC \  \forall \epsilon: \|\epsilon\| =1 \iff      &  f^T(\theta d + r \epsilon) \ge 0 \quad \forall \epsilon: \|\epsilon\| =1, \forall f\in \GC_*: \|f\|_* =1  \\
  \iff      &  \theta f^T d + r \min\{ f^T \epsilon : \|\epsilon\| =1  \}\ge 0, \    \forall f\in \GC_*: \|f\|_* =1  \\
    \iff      &  \theta f^T d - r  \ge 0, \    \forall f\in \GC_*: \|f\|_* =1  \\
    \iff   &     \theta   \ge \frac{r}{\psi_d}. %{\min \{   f^T d  : f \in \GC_*, \|f\|_*=1 \} } .
 \end{align*}
 In the first line, we use the fact that $\GC=(\GC_*)_*$ (recall that  $\GC$ is closed) and in third line, we use the fact   that $\min\{ f^T \epsilon :  \|\epsilon\| =1  \} =  - \| f \|_* = -1$. Notice that $ \psi_d > 0$ since $d \in \int(\GC)$ and $f\in \GC_*$.

For the second case, if $d \in \bd (\GC)$, then there exists $f\in \GC_*$ such that $f^T d =0$. In this case, the inner minimization would give an optimal value of 0. Hence, we conclude that the optimizer of the outer maximization should be in the interior of cone $\GC$.

 Finally, we let $\Psi_{\GC, \|\cdot\|} = \max\{\psi_d : d\in \GC, \|d\|=1\} > 0$.
\end{proof}
%\BK{Notice that the minimizer will be an extreme ray of the dual cone $\GC_*$, normalized with respect to the dual norm. If the original cone is ``narrow", then the angle between $d$ and $f$ will be large, therefore, $\theta$ will be large as well.}

% \DM{What happens if the continuous relaxation is not solvable?}
Now, we provide the main result of this section as stated in the theorem below, which gives an integrality gap bound when $\SS$ has a full-dimensional, closed and convex recession cone as a function of its wideness parameter. %\BK{Please give a nicer preamble to the theorem :)} 
%\BK{The pointedness assumption can be removed below}
\begin{thm}\label{thm:proximityInFullDimCone}
    Consider a convex set $\SS \subseteq \rr^n$ and assume that its recession cone $\GC = \rec(\SS)$ is full-dimensional, closed and convex.
    Let  $\hat x \in \SS$. 
    Then, there exists $x' \in \SS \cap(\zr)$ such that $\|   \hat x - x' \|_2  \le  \frac{\sqrt{n_1}}2 \left ( \frac1{\Psi_{\GC, \|\cdot\|_2}} +1 \right )$. 
    % \[
    % \Prox_{\hat x }(\GC) = \min_{x \in \SS \cap \zz^n} \|  x - \hat x \|_2  \le   \frac{ \sqrt{n}/2}{\min \{   f^T d  : f \in \GC_*, \|f\|_2=1 \} } .
    % \]
    %  \[
    % \Prox_{\hat x }(\SS) = \min_{x \in \SS \cap \zz^n} \|  x - \hat x \|_2  \le  \frac{\sqrt{n}}2 \left ( \frac1{\Psi_{\GC, \|\cdot\|_2}} +1 \right ) .
    % \]
    Moreover,
    assuming that  $\hat z = \inf_{x\in \SS} \alpha^T x $ is bounded below for $\alpha\in\rr^n$, we have
    \[
    \IG(\SS) = \min_{x\in \SS \cap(\zr)} \alpha^T x -\hat z  \leq  \| \alpha\|_2 \frac{\sqrt{n_1}}2 \left ( \frac1{\Psi_{\GC, \|\cdot\|_2}} +1 \right ).
    \]
\end{thm}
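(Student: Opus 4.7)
The plan is to combine Lemma~\ref{lem:ballInFullDimCone} with the classical fact that the covering radius of $\zz^n$ in the Euclidean norm equals $\mu(\zz^n) = \sqrt{n}/2$. The rough picture: starting at $\hat x$, I translate deep into the recession cone $\GC$ along an interior direction $d^*$, producing a ball contained in $\SS$ whose radius is exactly the covering radius of $\zz^n$; this ball must contain a lattice point, and the triangle inequality bounds its distance to $\hat x$.

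More concretely, first I would invoke Lemma~\ref{lem:ballInFullDimCone} to obtain an interior unit vector $d^* \in \int(\GC)$ attaining the value $\Psi_{\GC,\|\cdot\|_2}$, together with the conclusion that $B(\theta d^*, r) \subseteq \GC$ whenever $\theta \geq r/\Psi_{\GC,\|\cdot\|_2}$. I set $r := \sqrt{n}/2$ (the Euclidean covering radius of $\zz^n$) and $\theta := r/\Psi_{\GC,\|\cdot\|_2}$. Next I would show
\begin{equation*}
B\bigl(\hat x + \theta d^*,\, r\bigr) \subseteq \SS.
\end{equation*}
Indeed, every $y$ in this ball can be written as $y = \hat x + (\theta d^* + w)$ with $\|w\|_2 \leq r$; by the choice of $\theta$ we have $\theta d^* + w \in \GC = \rec(\SS)$, and since $\hat x \in \SS$, the definition of the recession cone gives $y \in \SS$.

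Because the radius of this ball matches $\mu(\zz^n)$, it must contain some lattice point $x^* \in \zz^n$, which lies in $\SS \cap \zz^n$. The triangle inequality then yields
\begin{equation*}
\|x^* - \hat x\|_2 \;\leq\; \|\theta d^*\|_2 + r \;=\; \theta + r \;=\; \frac{\sqrt{n}}{2}\left(\frac{1}{\Psi_{\GC,\|\cdot\|_2}} + 1\right),
\end{equation*}
establishing the proximity bound.

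For the integrality gap I would combine the proximity bound with Cauchy--Schwarz: assuming $\hat z > -\infty$, for each $\varepsilon > 0$ choose $\hat x_\varepsilon \in \SS$ with $\alpha^T \hat x_\varepsilon \leq \hat z + \varepsilon$ (to handle the case where the infimum is not attained), apply the proximity bound at $\hat x_\varepsilon$ to obtain $x^*_\varepsilon \in \SS \cap \zz^n$, and bound $\alpha^T x^*_\varepsilon - \hat z \leq \|\alpha\|_2 \|x^*_\varepsilon - \hat x_\varepsilon\|_2 + \varepsilon$; letting $\varepsilon \downarrow 0$ yields the stated inequality. The main (minor) obstacle is the containment $B(\hat x + \theta d^*, r) \subseteq \SS$, which is the only place where the geometry of $\SS$ enters beyond the lemma; the rest of the argument is essentially a covering-radius sandwich together with Cauchy--Schwarz.
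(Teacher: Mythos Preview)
Your proposal is correct and follows essentially the same approach as the paper: invoke Lemma~\ref{lem:ballInFullDimCone} with $r=\sqrt{n}/2$ to get a ball $B(\hat x+\theta d^*,r)\subseteq\SS$ containing a lattice point, then use the triangle inequality for proximity and Cauchy--Schwarz with an $\varepsilon$-approximation for the integrality gap. The only cosmetic differences are that you spell out the containment $B(\hat x+\theta d^*,r)\subseteq\SS$ explicitly and treat the solvable and non-solvable cases uniformly via the $\varepsilon$ argument, whereas the paper separates them.
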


\begin{proof}
We will use Lemma~\ref{lem:ballInFullDimCone} to prove the first statement. Let $d^*$ be the maximizer as defined in this lemma. Then, we have that $\hat x + B(\theta d^*, r) \subseteq K$ for $\theta \ge\frac{r}{\Psi_{\GC, \|\cdot\|}}$. By choosing $r = \sqrt{n_1}/2$ and $\theta=\frac{r}{\Psi_{\GC, \|\cdot\|}}$, we guarantee that the ball $ B(\hat x +\theta d^*, r) \subseteq K$ contains a vector $x'\in\zr$ since $\sqrt{n_1}/2$ is the covering radius of the mixed-lattice $\zr$. Finally, we compute 
\[
\|\hat x - x'\|_2 
\le
  \|\hat x-(\hat x+\theta d^*)\|_2 
 +\|(\hat x+\theta d^*)-x'\|_2
 \le \|\theta d^*\|_2 + r =  \frac{\sqrt{n}}2 \left ( \frac1{\Psi_{\GC, \|\cdot\|_2}} +1 \right ).
\]
% where the first inequality follows from the triangle inequality.

%The first statement directly follows as a consequence of Lemma~\ref{lem:ballInFullDimCone} by choosing $r = \sqrt{n}/2$, and noting that $\{ x\in\mathbb{Z}^n : \| x - (\hat x + \theta d^* ) \|_2 \le \sqrt{n}/2 \} \neq \emptyset$.\BK{Expand this argument.}
%

For the second statement, we look at two cases: i)
If problem $\inf_{x\in \SS} \alpha^T x$ is solvable, then the upper bound on its integrality gap is obtained as a direct consequence of the upper bound on proximity due to the Cauchy-Schwarz inequality.
ii) 
If problem $ \inf_{x\in \SS} \alpha^T x$ is bounded below but not solvable, then we proceed as follows: Notice that for all $\epsilon > 0$, there exists $\hat x_\epsilon\in\SS$ such that $\alpha^T \hat x_\epsilon - \epsilon \le \hat z$, and there exists $x'_\epsilon\in\SS\cap(\zr)$ such that  $\| x'_\epsilon - \hat x_\epsilon \|_2 \le \frac{\sqrt{n_1}}2 \left ( \frac1{\Psi_{\GC, \|\cdot\|_2}} +1 \right )  $, which implies that $\alpha^T ( x'_\epsilon - \hat x_\epsilon ) \le \| \alpha \|_2 \frac{\sqrt{n_1}}2 \left ( \frac1{\Psi_{\GC, \|\cdot\|_2}} +1 \right )$. Therefore, we obtain that
\[
\IG(\SS) = 
\min_{x\in \SS \cap(\zr)} \alpha^T x - \hat z 
\le \alpha^T x_\epsilon' - \hat z \le 
\|\alpha\|_2 \frac{\sqrt{n_1}}2 \left ( \frac1{\Psi_{\GC, \|\cdot\|_2}} +1 \right ) +\epsilon ,
\]
for all $\epsilon > 0$. 
The result follows by letting $\epsilon \to 0^+$. %\BK{Is the proof of case ii) also valid for case i)?} {\color{red}yes but I prefer the current writing :)}
\end{proof}
% We remark that proximity and integrality  bounds given in Theorem~\ref{thm:proximityInFullDimCone} are independent of the point $\hat x$.

% \begin{cor}\label{cor:Int-GapInFullDimCone}
%     Let $\GC$ be a  full-dimensional cone in $\mathbb{R}^n$. %Let $d\in\int(\GC)$ such that $\|d\|_2=1$. 
%     Define a set $\SS= \{\hat x \} + \GC$, where $\hat x \in \mathbb{R}^n$ and suppose that $\alpha \in \GC_*$.  Then, we have that $\hat x = \argmin\{\alpha^T x : x \in S\}$ and the integrality gap     can be upper-bounded    as
%     % \[
%     %  \IG(\alpha, \GC) = \min_{x\in \SS \cap \zz^n} \alpha^T x - \min_{x\in \SS} \alpha^T x
%     % \]
%     % can be upper-bounded    as
%     % \[
%     %  \IG(\alpha, \GC) \leq   \frac{ \|\alpha\|_2 \sqrt{n}/2}{\min \{   f^T d  : f \in \GC_*, \|f\|_2=1 \} }.
%     % \]
%     \[
%     \IG(\alpha, \GC) = \min_{x\in \SS \cap \zz^n} \alpha^T x - \min_{x\in \SS} \alpha^T x \leq   \frac{ \|\alpha\|_2 \sqrt{n}/2}{\Psi_{\GC, \|\cdot\|_2}}.
%     \]
% \end{cor}
% \begin{proof}
% The statement directly follows as a consequence of Proposition~\ref{prop:proximityInFullDimCone}.
% \end{proof}

% \begin{dfn}\label{def:regular} A cone $\GC\subseteq\rr^m$ is a regular cone if it is full-dimensional, closed, convex, and pointed (it does not contains lines).
% \end{dfn}

Note that the constant $\Psi_{\GC, \|\cdot\|}$  depends only on $\GC$. In particular, for a conic set $\SS=\{x\in\rr^n\tq \matriz{A}x- \rhs\in {\GC}\}$  whose recession cone $\rec(\SS)=\{x\in\rr^n\tq \matriz{A}x\in {\GC}\}$ is full-dimensional, this constant does not depend on the r.h.s. $\rhs$ of $\SS$, and therefore, one can obtain estimations for the integrality gap that do not depend on the r.h.s. of the conic set, as in the case of linear IPs (Theorem~\ref{thm:cook1986sensitivity}).

\begin{exm}\label{ex:hyperbolaExample}
    Let us consider the hyperbola $\cH=\{x\in\rr_+^2 : \ x_1x_2\ge 1\}$  and
    assume that $\alpha\in\int(\rr_+^2)$. Let us compute a bound on $$\IG(\cH) = \min_{x\in\cH\cap\zz^2} \alpha^Tx - \min_{x\in\cH } \alpha^Tx. $$  For this purpose, we first observe that $\rec(\cH) = \rr_+^2$, which is a full-dimensional closed convex cone, and its \coneParameter is
    \[
    \Psi_{\rr_+^2, \|\cdot\|_2} = \max_{d\in \rr_+^2}  \left\{  \min_{ f \in \rr_+^2} \{    f^T d  \tq {  \|f\|_2=1}  \}  \tq {    \|d\|_2=1} \right\} 
    = \max_{d\in \rr_+^2}  \{\min\{ d_1, d_2  \}:  \|d\|_2=1\} = \frac{1}{\sqrt{2}}.
    \]
    Then, the bound from Theorem~\ref{thm:proximityInFullDimCone} gives
    \[
    \IG(\cH) \le \|\alpha\|_2 \frac{\sqrt{2}}{2}\left(\frac{1}{1/\sqrt{2}}+1\right) =  \|\alpha\|_2 \left(1+\frac{\sqrt{2}}2\right).
    \]
\end{exm}

% Note that the constant $\Psi_{\GC, \|\cdot\|}$ only depends on $\GC$. In particular, for a conic set $\SS(\rhs)=\{x\in\rr^n\tq \matriz{A}x\ge_\GC \rhs\}$ whose recession cone is full-dimensional, this constant only depends on the matrix $\matriz{A}$ and the cone $\GC$, and not on the right-hand side $\rhs$ since $\rec(S(\rhs))=\{x\in\rr^n\tq \matriz{A}x\ge_\GC 0\}$. \DM{Fix this poorly written sentence. It may depend on our new notation.}

%We remark that although  Theorem~\ref{thm:proximityInFullDimCone} is given for a translated cone, the same results hold  true if the recession cone of the set $\SS$ is a full-dimensional convex cone.

% \begin{dfn}
% A strictly convex set is a convex set $\SS$ such that for all $x,y\in S$, $x\neq y$ and for all $\alpha\in (0,1)$ we have $\alpha x+ (1-\alpha)y \in \ri(S)$.
% \end{dfn}

% \begin{dfn}\label{def:generator} A generator for a pointed closed convex cone $\GC\subseteq\rr^m$ is a bounded closed convex set $\GG\subseteq \rr^m$ of dimension $\dim(\GC)-1$ such that $\GC=\cone(\GG)$. We say that $\GC$ is generated by $\GG$.
% \end{dfn}

{We will show below that when $\rec(\SS)$ is not full-dimensional, it may not be possible to quantify the integrality gap only as a function of the recession cone, even when $\SS$ is a rational polyhedral cone. %This is in contrast to the full-dimensional case discussed in Section~\ref{sec:fullDimRecConeCase}.
\begin{exm}
    Consider the family of polyhedral sets $\SS_k=\{ x\in[0,1]\times \rr : \ (2k-1)x_1 + {x_2} \ge k-\frac12, (2k-1)x_1 - {x_2} \le k-\frac12  \}$, where $k\in\zz_+$. Notice that $\rec(\SS_k)= \{x\in\rr^2: x_1=0, x_2\ge 0\}$ for each $k\in\zz_+$. Suppose that $\alpha=e_2$. Then, the optimal solution for the continuous relaxation is $(\frac{1}{2},0)$ with $\valcp(\SS_k)=0$ and an optimal solution for the integer program is $(0,k)$ with $\valcip(\SS_k)=k$.  Hence, $\IG(\SS_k)=k$. 
    However, since $\rec(\SS_k)= \{x\in\rr^2: x_1=0, x_2\ge 0\}$ is independent of $k$, it is not possible to obtain this value solely as a function of the recession cone. 
    %The integrality gap bound due to Theorem~\ref{thm:cook1986sensitivity} is $n\Delta_1\|\alpha\|_1=2 (2k-1) $.
\end{exm}
}

\subsection{Convex IPs defined by sets with  polyhedral integer hull}
%%%%%%%%%%%%%%%%%%%%%%%%%%%%%%%%%%%%

% \DM{Are we using the sets are pointedness here?}

 In this section, we study convex IPs ($n_2=0$) defined by convex sets that can be approximated by a sequence of polyhedra having rational polyhedral recession cones. We show that for these sets, their integer hulls are rational polyhedra and that the integrality gaps of the associated optimization problems~\eqref{eq:convexIP} are finite. Recall that in Example~\ref{exSOCr}, the integrality gap is infinite, even when the recession cone of the set $\SS$ is a rational ray and the integer hull of $\SS$ is a rational polyhedral set. This illustrates that when the recession cone of $\SS$ is not full-dimensional, one needs stronger conditions for the integrality gap to be finite. 

 Our main result in this section is a characterization of convex sets that can be approximated by a rational polyhedron. The study of this class of sets is motivated by the following remark.
 \begin{rem} \label{rem:approx-S-P}
 If 
 $\PP=\{x\in\rr^n\tq \matriz{A}x\geq b\}$ is a rational polyhedral outer-approximation of a convex set $\SS \subseteq \rr^n$ such that $\SS\cap \zz^n=\PP\cap \zz^n$, then we note that 
\begin{align*}
 \IG(\SS)&=\inf\{\alpha^Tx\tq x\in \SS\cap \zz^n\}-\inf\{\alpha^Tx\tq x\in \SS\} %\\
 %&
 \leq  \inf\{\alpha^Tx\tq x\in \PP\cap \zz^n\}-\inf\{\alpha^Tx\tq x\in \PP\} =\IG(\PP). %\\
% &
 %\leq {\color{red}\chi(A)\|\alpha\|_1},%n\|\alpha\|_1\Delta},
\end{align*} 
\noindent In particular,  we can estimate the integrality gap of~\eqref{eq:convexIP} as follows: $\IG(\SS) \leq\IG(\PP)\leq\|\alpha\|_1 n\Delta_{n-1}(A)$ due to Theorem~\ref{thm:cook1986sensitivity}. 
\end{rem}
However, finding such a polyhedral approximation is not always possible, not even when the integer hull of $\SS$ is a rational polyhedron. For instance, for the hyperbola $\cH$ in Example~\ref{ex:hyperbolaExample}, no such approximation exists since any such polyhedron $\PP$ must contain the $x$-axis, that is an asymptote of $\cH$, and therefore $\PP\cap \zz^n\supsetneq \SS\cap \zz^n$ (for a rigorous proof see Proposition 3 in~\cite{santana2017some}). This example motivates the following definition.
\begin{dfn}\label{dfn:almostrationalpoly}
   A convex set $\SS$ is called {\em almost rational polyhedral} if there exists a polyhedron $\PP=\{x\in\rr^n\tq \matriz{A}x\leq b\}$ such that:  
   \begin{enumerate}
    \item $\SS\subseteq \PP$ 
    \item $\rec(\PP)$ is a rational polyhedral cone.
     \item There exists a sequence of polyhedra $\PP^\nu=\{x\in\rr^n\tq \matriz{A}^\nu x\leq b^\nu\}$ such that $\lim_{\nu\to \infty}(A^\nu,b^\nu)=(A,b)$ and, for all $\nu\geq 1$, $\rec(\PP^\nu)=\rec(\PP)$ and $\PP^\nu\cap \zz^n=\SS\cap\zz^n$.
%%%% NEW definition
%     \item There exists a sequence $\{(A^\nu,b^\nu)\}_{\nu=1}^\infty$ converging to
% $(A,b)$ such that $\limsup_{\nu\to\infty} \operatorname{rank}(A_i^\nu,\; i\in I)
% \;\le\;
% \operatorname{rank}(A_i,\; i\in I)$, where $I := \{\, i \tq A_i x = b_i \text{ for all } x \in \PP\,\}$, and such that the polyhedra $\PP^\nu=\{x\in\rr^n\tq \matriz{A}^\nu x\leq b^\nu\}$, $\nu\geq1$, satisfy $\lim_{\nu\to \infty}\PP^\nu=\PP$, $\PP^\nu\cap \zz^n=\SS\cap\zz^n$ and $\rec(\PP^\nu)$ is a rational polyhedral cone.
%%%% OLD definition
    % \item There exists $\epsilon_0>0$ and a $m\times n$ matrix $M$ such that for all $\epsilon>0$ with $\epsilon\leq \epsilon_0$ we have that polyhedron
    %    $$\PP_\epsilon:=\{x\in\rr^n\tq (A+\epsilon M^TA)x\leq b+\epsilon M^Tb,\ Ax\leq b\}$$
    %    \noindent satisfies $\PP_\epsilon\cap \zz^n=\SS\cap\zz^n$.
   \end{enumerate}
\end{dfn}
% \begin{dfn}
%    A convex set $\SS$ is called {\em almost rational polyhedral} if there exists a polyhedron $\PP=\{x\in\rr^n\tq \matriz{A}x\leq b\}$, with rational recession cone, such that  $\SS\subseteq \PP$ and for all $\epsilon\in (0,1)$ we have   $\SS\cap\zz^n=\PP_\epsilon\cap\zz^n$ where $\PP_\epsilon:=\{x\in\rr^n\tq \matriz{A}x\leq b-\epsilon\ones_J\}$ and  $\ones_J\in\rr^m$ is the characteristic vector of the set $J:=\{i\in\{1,\ldots,m\}\tq \{x\in\SS\cap\zz^n\tq e_i^TAx=b_i\}=\emptyset\}$ (that is, $J$ is the set of indexes of rows of $A$ such that the hyperplane $e_i^TAx=b_i$ does not contain integer points of $\SS$). 
% \end{dfn}

 In~\cite{DM2013}, sufficient and necessary conditions are given  for a convex set $\SS$ to have a polyhedral integer hull. In order to state their results, we need the following definition: A closed convex set  $\SS$ is {\em thin} if $\valcp(\SS)=-\infty$ if and only if there exists $r\in \rec(\SS)$ such that $\alpha^Tr<0$.
\begin{thm}[Theorem 6 in~\cite{DM2013}]\label{thm:poly}
Let $\SS \subseteq \rr^n$ be a closed convex set. If $\SS$ is thin and the recession cone of $\SS$ is a rational polyhedral cone, then $\textup{conv}(\SS \cap \mathbb{Z}^n)$ is a polyhedron. Moreover, if $\textup{int}(\SS)\cap\zz^n\neq\emptyset$ and $\textup{conv}(\SS \cap \mathbb{Z}^n)$ is a polyhedron, then $\SS$ is thin and $\rec(\SS)$ is a rational polyhedral cone.
\end{thm}

Since polyhedra with rational recession cones have rational polyhedral integer hulls due to Theorem~\ref{thm:poly}, if $\SS$ is almost rational polyhedral, then its integer hull is a rational polyhedral set as $\SS\cap\zz^n=\PP^{1}\cap\zz^n$ and $\PP^1$ is a polyhedron with a rational recession cone. 

We need some notation and lemmas.
Let $W$ be an affine subspace and let $S\subseteq W$ be a set of $W$. The subspace orthogonal to $W$ will be denoted as $W^\perp$. A set $K\subseteq W$ is said to be an $S$-free convex set of $W$ if the interior of $K$ with respect to the topology of $\rr^n$ induced on $W$ does not contain points of $S$. %For a matrix $M$,  let $M_i$ denote the $i$th row of $M$. \BK{do we use this notation?}

% Let $\{A_n\}$ be a sequence of subsets of a metric space $X$.
% We define the outer limit, $\limsup_{n\to\infty} A_n$, by
% \[
% \limsup_{n\to\infty} A_n
% =
% \{ x \in X \mid x = \lim_{i\to\infty} x_{n_i},
% \text{ where } \{n_i\} \text{ is an infinite subsequence of the integers and }
% x_{n_i} \in A_{n_i} \}.
% \]

% We define the inner limit, $\liminf_{n\to\infty} A_n$, by
% \[
% \liminf_{n\to\infty} A_n
% =
% \{ x \in X \mid x = \lim_{n\to\infty} x_n,
% \text{ where } x_n \in A_n \text{ for all but a finite number of } n \}.
% \]

% If $\limsup_{n\to\infty} A_n = \liminf_{n\to\infty} A_n$,
% we say that the limit, $\lim_{n\to\infty} A_n$, exists and set $\lim_{n\to\infty} A_n = \limsup_{n\to\infty} A_n = \liminf_{n\to\infty} A_n$.

\begin{lem}[Corollary 4.10 in~\cite{DM2011}]\label{lem:sfree_prop}
    Let $S_i\subseteq \zz^n\cap W$, $i=1,\dots,N$ such that $S_i=\conv(S_i)\cap \zz^n$. Denote $S=\bigcup_{i=1}^NS_i$.  Let $K \subseteq W$ be an $S$-free convex set of $W$. Then there exists an $S$-free polyhedron $B\subseteq W$ such that $K\subseteq B$.
\end{lem}

% \begin{lem}{\cite[Theorem II.2.2]{dantzig1967},\cite{Wets1985}}\label{lem:poly_convergence}
% Let $\{(A^\nu,b^\nu)\}_{\nu=1}^\infty$ be a sequence converging to
% $(A,b)$ and let $\QQ(A^\nu,b^\nu)=\{x \tq A^\nu x \leq b^\nu\}$
% and $\QQ(A,b)=\{x \tq Ax \leq b\}$. Let
% $I := \{\, i \tq A_i x = b_i \text{ for all } x \in \QQ(A,b)\,\}$. Suppose that $\limsup_{\nu\to\infty} \operatorname{rank}(A_i^\nu,\; i\in I)
% \;\le\;
% \operatorname{rank}(A_i,\; i\in I)$. Then either $\lim_{\nu\to\infty} \QQ(A^\nu,b^\nu) = \QQ(A,b)$, or $\liminf_{\nu\to\infty} \QQ(A^\nu,b^\nu) = \emptyset$.
% \end{lem}

% \begin{lem}
%    Let  $\PP=\{x\in\rr^n\tq \matriz{A}x\leq b\}$ a polyhedron. For any $\epsilon>0$ consider the polyhedron $\PP_\epsilon:=\{x\in\rr^n\tq (A+\epsilon M^TA)x\leq b+\epsilon M^Tb,\ Ax\leq b\}$. Then
%  $\lim_{\epsilon \to 0+}\valcp(\PP_\epsilon)=\valcp(\PP)$.       
% \end{lem}

The first part of the next result is a characterization of almost rational polyhedral  sets and the second part shows that the integrality gap of a convex MIP whose feasible region is an almost rational polyhedral set can be bounded above as in Remark~\ref{rem:approx-S-P} when the associated sequence of polyhedra are defined by the same l.h.s. matrix.

\begin{thm}\label{thm:thin_finite_IG}
Let $\SS\subseteq \rr^n$ be a closed convex set. Then
\begin{enumerate}
    \item If $\SS$ is almost rational polyhedral, then $\SS$ is thin, $\rec(\SS)$ is a rational polyhedral cone and $\SS$ satisfies the finiteness property. Conversely, if $\SS$ satisfies the finiteness property and its integer hull is a rational polyhedron, then $\SS$ is almost rational polyhedral.
    \item Let $\SS$ be almost rational polyhedral and let $\PP=\{x\in\rr^n\tq \matriz{A}x\leq b\}$ and $\PP^\nu=\{x\in\rr^n\tq \matriz{A}^\nu x\leq b^\nu\}$ for $\nu\geq 1$ be rational polyhedra satisfying the properties in the definition of almost rational polyhedral. Assume that $A^\nu=A$ for all $\nu\geq1$. Then,
$$\IG(\SS)\leq n\|\alpha\|_1\Delta_{n-1}(\matriz{A}).$$
\end{enumerate} 
\end{thm}
\begin{proof}\leavevmode
\begin{enumerate}
    \item 

We first assume that $\SS$ is almost rational polyhedral. Let $\PP=\{x\in\rr^n\tq \matriz{A}x\leq b\}$  and $\PP^\nu=\{x\in\rr^n\tq \matriz{A}^\nu x\leq b^\nu\}$ as in the definition of almost rational polyhedral. Then $\SS_I:=\conv(\SS\cap\zz^n)=\conv(\PP^{1}\cap\zz^n)$ and thus $\SS_I$ is a rational polyhedron with the same recession cone as $\PP^1$. In particular, $\rec(\SS_I)=\rec(\PP^{1})=\rec(\PP)$. Moreover, we have that $\SS_I\subseteq\SS\subseteq \PP$, so we obtain that $\rec(\SS)=\rec(\PP)$, a rational polyhedral cone, and by Lemma~13 in~\cite{DM2013}, we conclude that $\SS$ is a thin set. To see that $\SS$ satisfies the finitenes property, note that if $\valcp(\SS)>-\infty$, since $\SS\subseteq\PP$, we have $\valcp(\PP)>-\infty$. This implies that there exists $r\in\rec(\PP)$ such that $\alpha^Tr<0$. Since $\rec(\SS_I)=\rec(\PP)$, we conclude that $\valcip(\SS)>-\infty$.

% \DM{Lemma 2 uses affine space $W$. Do we need that? Or is it better to assume full-dimensionality of $\SS$? One concern is what happens when $\int(\SS)=\emptyset$ {\color{magenta}[UPDATE: this is a real concern, since in that case, the affine hull of $\SS$ will be contained in $B$, and thus, $B$ will not cut off points in the {\bf relative interior} , which is what we want; it can be readily fixed by using $W=\aff(\SS)$ of $\SS$]} in this case, when using Lemma 2, the set $B$ will be an irrational hyperplane (but magically $\PP=\QQ\cap B$ will have rational recession cone). Another concern is how to create the $\PP^\nu$, do we need full-dimensionality?}

Now, we assume that $\SS$ satisfies the finiteness property and its integer hull is a rational polyhedron. Let $\hat \QQ=\{x\in\rr^n\tq a_i^Tx\leq \hat b_i\ \text{for all}\ i=1,\ldots, m\}$ be the rational polyhedron representing the integer hull of $\SS$. Without loss of generality, we assume that for all $i=1,\ldots,m$, we have $a_i\in\zz^n$ and that its components have greatest common divisor equal to 1 (this implies that, for each $z\in\zz$, the equation $a_i^Tx=k$ always has a solution $x\in\zz^n$).
Since $\SS$ satisfies the finiteness property, we obtain that  $b_i:=\sup\{a_i^Tx\tq x\in\SS\}<+\infty$ for all $i=1,\ldots,m$. Define the polyhedron
$$\QQ=\{x\in\rr^n\tq a_i^Tx\leq b_i\ \text{for all}\ i=1,\ldots, m\}.$$
%

% %

 Note that for each $z\in (\QQ\cap\zz^n)\setminus\hat\QQ$, there exists $i=1,\ldots, m$ such that $\hat b_i+1\leq a_i^Tz\leq b_i$. Let $W=\aff(\SS)$. For each $i=1,\ldots,m$, define $S_i=\{x\in\zz^n\tq \hat b_i+1\leq a_i^Tx\leq b_i,\ x\in W\}$ and let $S=\bigcup_{i=1}^mS_i$. Note that $S_i=\conv(S_i)\cap \zz^n$ since $\conv(S_i)\subseteq \{x\in\rr^n\tq \hat b_i+1\leq a_i^Tx\leq b_i,\ x\in W\}$. Also, $\ri(\SS)\cap S_i=\emptyset$ since $\SS\cap S_i=\hat Q\cap S_i\subseteq \{x\in\rr^n\tq \hat b_i +1\leq a_i^Tx\leq \hat b_i\}=\emptyset$. Therefore, $\SS$ is an $S$-free convex set of $W$ and by Lemma~\ref{lem:sfree_prop}, we obtain that there exists an $S$-free polyhedron $B'\subseteq W$ such that $\SS\subseteq B'$. Note that $\ri(B')\neq \emptyset$. Let $B\subseteq \rr^n$ be the polyhedron defined as $B=B'+W^\perp$ and note that $\int(B)=\ri(B')+W^\perp$ so $\int(B)\cap S=\emptyset$. 

Define $\PP=\QQ\cap B$. Then clearly, $\SS\subseteq \PP$. Moreover, since $\rec(\hat \QQ)\subseteq \SS\subseteq\rec(\PP)\subseteq \rec(\QQ)$ and $\rec(\hat \QQ)=\rec(\QQ)$, we obtain that $\rec(\PP)=\rec(\SS)=\rec(\hat \QQ)$ is a rational polyhedral cone.

Next we will construct a sequence of polyhedra $\{\PP^\nu\}_{\nu=1}^\infty$ such that $\lim_{\nu\to \infty}\PP^\nu=\PP$ and, for all $\nu\geq 1$, $\rec(\PP^\nu)=\rec(\PP)$ and $\PP^\nu\cap \zz^n=\SS\cap\zz^n$.

Since $\SS\subseteq \PP$, we have that $\SS\cap\zz^n\subseteq \PP\cap\zz^n$. If $\SS\cap\zz^n= \PP\cap\zz^n$, then we define $\PP^\nu=\PP$ for all $\nu\geq 1$, and thus $\SS$ is almost rational polyhedral. If $\SS\cap\zz^n\neq \PP\cap\zz^n$, by definition of $\PP$, we have that $[(\PP\cap\zz^n)\setminus (\SS\cap\zz^n)]\cap \ri(\PP)=\emptyset$ since $(\PP\cap\zz^n\setminus \SS\cap\zz^n)\subseteq S$, $B$ is an $S$-free set and $\ri(\PP)=\ri(\QQ)\cap \int(B)$. Since $B$ is a polyhedron, we can write $B=\{x\in\rr^n\tq a_i^Tx\leq b_i\ \text{for all}\ i=m+1,\ldots, m+p\}$ for some $p$ appropriate inequalities. With this notation, we have $\PP=\{x\in\rr^n\tq a_i^Tx\leq b_i\ \text{for all}\ i=1,\ldots, m+p\}$. For any $\nu\geq 1$, we define  the following polyhedron:
\begin{equation}\label{eq:Pnu_def}
\PP^\nu =\left\{x\in\rr^n\tq a_i^Tx\leq b_i,\ a_i^Tx+\frac{1}{\nu}a_j^Tx\leq b_i+\frac{1}{\nu}\hat b_j\ \text{for all}\ i=1,\ldots, m+p,\ j=1,\ldots, m\right\}.
\end{equation}

Note that with this notation, we can write 
\begin{equation}\label{eq:P_def}
\PP=\left\{x\in\rr^n\tq a_i^Tx\leq b_i,\ a_i^Tx\leq b_i\ \text{for all}\ i=1,\ldots, m+p,\ j=1,\ldots, m\right\}.
\end{equation}
First we show that $\SS\cap\zz^n= \PP^\nu\cap \zz^n$. Note that all the inequalities defining $\PP^\nu$ are valid for $\SS\cap \zz^n$ since they are either valid for $\hat \QQ\supseteq (\SS\cap\zz^n)$ or for  $\PP\supseteq\SS$, hence $\SS\cap\zz^n\subseteq \PP^\nu\cap \zz^n$. Now, assume for a contradiction that there exists $z\in (\PP^\nu\cap\zz^n)\setminus (\SS\cap \zz^n)$. Since $\PP^\nu\subseteq \PP$, we have that $z\in \PP\setminus\ri(\PP)$ and thus, there exists $i^*=1,\ldots,m+p$ such that $a_{i^*}^Tz=b_{i^*}$ (an inequality of $\PP$ satisfied at equality by this point). Moreover, since $z\notin (\SS\cap \zz^n)$, there exists $j^*=1,\ldots,m$ such that $a^T_{j^*}z\geq \hat b_{j^*}+1$ (an inequality of $\hat \QQ$ that is violated by this point). We obtain that 
$$a_{i^*}^Tz+\frac{1}{\nu}a_{j^*}^Tz\geq b_{i^*}+\frac{1}{\nu}\hat b_{j^*}+\frac{1}{\nu}>b_{i^*}+\frac{1}{\nu}\hat b_{j^*},$$
\noindent  which implies that $z\notin \PP^\nu$, a contradiction. We conclude that $\SS\cap\zz^n= \PP^\nu\cap \zz^n$.

Since $\SS\cap\zz^n\subseteq \PP^\nu$ we have that $\hat Q\subseteq \PP^\nu\subseteq \PP$ and we obtain $\rec(\PP^\nu)=\rec(\PP)$ for all $\nu\geq 1$. On the other hand, from \eqref{eq:Pnu_def} and \eqref{eq:P_def}, it is easy to see that  the vectors and number defining the inequalities of $\PP^{\nu}$ converge to the ones defining $\PP$.

\item Now assume that $\SS$ is almost rational polyhedral, and  let $\PP$  and $\PP^\nu$ the polyhedral sets in the definition of almost rational polyhedral. Assume that for  all $\nu\geq1$, we have $A^\nu=A$. Then, for any $\nu\geq 1$, we have:
$$
\IG(\SS)
    \leq \inf_{x\in\PP^\nu\cap\zz^n}\alpha^Tx-\inf_{x\in\PP}\alpha^Tx
    \leq \IG(\PP^\nu)+\left(\inf_{x\in\PP^\nu}\alpha^Tx-\inf_{x\in\PP}\alpha^Tx\right)
     \leq n\|\alpha\|_1\Delta_{n-1}\left(\matriz{A}\right)+\|\alpha \|_1 n\Delta_{n-1}(A)\frac{1}{\nu}.
$$
% \begin{align*}
% \IG(\SS)&=\inf_{x\in\SS\cap\zz^n}\alpha^Tx-\inf_{x\in\SS}\alpha^Tx\\
%     &\leq \inf_{x\in\PP^\nu\cap\zz^n}\alpha^Tx-\inf_{x\in\PP}\alpha^Tx\\
%     &\leq \IG(\PP^\nu)+\left(\inf_{x\in\PP^\nu}\alpha^Tx-\inf_{x\in\PP}\alpha^Tx\right)\\
%      &\leq n\|\alpha\|_1\Delta_{n-1}\left(\matriz{A}\right)+\|\alpha \|_1 n\Delta_{n-1}(A)\frac{1}{\nu}.
% \end{align*}
Here, the first inequality uses that $\SS\subseteq \PP$ and that $\SS\cap\zz^n=\PP^\nu\cap \zz^n$ for all $\nu\geq 1$, and the third inequality is due to Theorem 1 and Theorem 5 in~\cite{cook1986sensitivity}. Since this chain of inequalities is valid for all $\nu\geq1$, we conclude that $\IG(\SS)\leq n\|\alpha\|_1\Delta_{n-1}(\matriz{A})$ as desired.
\end{enumerate}
\end{proof}

% Let $\alpha \in\zz^n$ and let $\SS$ be a closed and thin convex set such that its integer hull is the non-empty rational polyhedron $\PP=\{x\in\rr^n\tq \matriz{A}x\leq b\}$. Assume  that $\rec(\SS)=\rec(\PP)$ and $\valcp(\SS)>-\infty$, then
% \begin{enumerate}
%     \item For $i=1,\ldots, m$ let $a_i$ denote the $i$th row of $\matriz{A}$. Then $b'_i:=\sup\{a_i^Tx\tq x\in\SS\}<+\infty$.
%     \item The integrality gap of $\SS$ can be estimated as 
%      $$\IG(\SS)\leq n\|\alpha\|_1\Delta_n(\matriz{A})(1 +\|b-b'\|_\infty),$$
%      \noindent where the vector $b'\in\rr^m$ has components $b'_i:=\sup\{a_i^Tx\tq x\in\SS\}$.
% \end{enumerate}

% \begin{proof} 

% We start by proving part (i.). Let $i=1,\ldots,m$. Clearly, $\sup\{a_i^Tx\tq x\in\PP\}<+\infty$. Now, since $\rec(\SS)=\{x\in\rr^n\tq \matriz{A}x\leq 0\}$ and $\SS$ is a closed thin convex set, we conclude that $b'_i:=\sup\{a_i^Tx\tq x\in\SS\}<+\infty$, since there cannot exist $r\in\rec(\SS)$ such that $a_i^Tr>0$.

%  Now we prove part (ii.). Note that the polyhedron $\PP'=\{x\in\rr^n\tq \matriz{A}x\leq b'\}$ satisfies $\PP\subseteq \SS\subseteq \PP'$. Therefore, we have
% \begin{align*}
%  \IG(\SS)&=\inf\{\alpha^Tx\tq x\in \PP\}-\inf\{\alpha^Tx\tq x\in \SS\}\\
%  &\leq  \inf\{\alpha^Tx\tq x\in \PP\}-\inf\{\alpha^Tx\tq x\in \PP'\}\\
%  &\leq n\|\alpha\|_1\Delta(\|b-b'\|_\infty),
%  \end{align*}  
% \noindent where the last inequality from Theorem 5 in~\cite{cook1986sensitivity}.

% \DM{Check the proof of part (ii.) to see if the bound is best possible. Maybe from other paper?}
% \end{proof}

\begin{exm}\label{ex:hyperbolaExampleRevisited}
    Let us consider the hyperbola $\cH=\{x\in\rr_+^2 : \ x_1x_2\ge 1\}$ and
    assume that $\alpha\in\int(\rr_+^2)$. Then $\cH$ is an almost rational polyhedral set. Indeed, it is easy to check that $\PP=\{x\in\rr \tq \ x_1\geq 0,\ x_2\ge 0\}$ and $\PP^\nu=\{x\in\rr \tq \ x_1\geq 1/\nu,\ x_2\ge 1/\nu\}$ for $\nu\geq1$ satisfy the definition. Moreover, all these polyhedra have the same l.h.s. matrix, so we can use part (ii.) in Theorem~\ref{thm:thin_finite_IG} to obtain the following bound
 $$\IG(\cH) \leq  2\|\alpha\|_1$$
Note that, in general, this bound is greater than or equal to the bound from Theorem~\ref{thm:proximityInFullDimCone} obtained for Example~\ref{ex:hyperbolaExample}.
\end{exm}

%%%%%%%%%%%%%%%%%%%%%%%%%%%%%%%%%%%%%%%%%%%%%%%%%%%%%%%%%%%%%%%%%%%%%%%%%%%%%%
%%%%%%%%%%%%%%%%%%%%%%%%%%%%%%%%%%%%%%%%%%%%%%%%%%%%%%%%%%%%%%%%%%%%%%%%%%%%%%
%%%%%%%%%%%%%%%%%%%%%%%%%%%%%%%%%%%%%%%%%%%%%%%%%%%%%%%%%%%%%%%%%%%%%%%%%%%%%%

%%%%%%%%%%%%%%%%%%%%%%%%%%%%%%%%%%%%%%%%%%%%%%%%%%%%%%%%%%%%%%%%%%%%%%%%%%%%%%
%%%%%%%%%%%%%%%%%%%%%%%%%%%%%%%%%%%%%%%%%%%%%%%%%%%%%%%%%%%%%%%%%%%%%%%%%%%%%%
%%%%%%%%%%%%%%%%%%%%%%%%%%%%%%%%%%%%%%%%%%%%%%%%%%%%%%%%%%%%%%%%%%%%%%%%%%%%%%
%%%%%%%%%%%%%%%%%%%%%%%%%%%%%%%%%%%%%%%%%%%%%%%%%%%%%%%%%%%%%%%%%%%%%%%%%%%%%%
%%%%%%%%%%%%%%%%%%%%%%%%%%%%%%%%%%%%%%%%%%%%%%%%%%%%%%%%%%%%%%%%%%%%%%%%%%%%%%
%%%%%%%%%%%%%%%%%%%%%%%%%%%%%%%%%%%%%%%%%%%%%%%%%%%%%%%%%%%%%%%%%%%%%%%%%%%%%%
%%%%%%%%%%%%%%%%%%%%%%%%%%%%%%%%%%%%%%%%%%%%%%%%%%%%%%%%%%%%%%%%%%%%%%%%%%%%%%
%%%%%%%%%%%%%%%%%%%%%%%%%%%%%%%%%%%%%%%%%%%%%%%%%%%%%%%%%%%%%%%%%%%%%%%%%%%%%%
\section{Integrality gap via polyhedral approximations}
\label{sec:intGapViaPolyApprox}

%\BK{In previous sections we use $\PP$ to denote polyhedral sets, which is consistent with the notation $\SS$ for a convex set.}
In virtue of Remark~\ref{rem:approx-S-P}, one might be tempted to use a \textit{rational} polyhedral outer-approximation $\PP$ of the convex set $\SS$ with the property that $\PP\cap \zz^n= \SS \cap \zz^n$
to obtain an upper bound on the integrality gap.
%However, such an approach might give looser upper bounds on the integrality gap, that is, $\IG(P)$ might be arbitrarily bad compared to $\IG(\SS)$.
However, as we will argue in the rest of this section, this approach might lead to loose upper bounds, that is, the upper bound obtained for $\IG(\PP)$ due to Theorem~\ref{thm:cook1986sensitivity} might be much larger compared to $\IG(\SS)$, even for the {simple} convex set $\SS = \cB_R := B(0,R) \subseteq \rr^n$.
We proceed as follows:
In Section~\ref{sec:tightBoundB_R}, we compute a tight upper bound of $\|\alpha\|_1$ for $\IG(\cB_R)$, which is independent of~$R$. 
In
Section~\ref{sec:upperBoundB_r-extended}, we show that the rational polyhedral approximation $\PP$ of  $\cB_R$ obtained from~\cite{kocuk2021rational} in an extended space may yield weak bounds even when $n=2$ and $R=1$. 
Finally, in 
Section~\ref{sec:upperBoundB_r-original}, we show that any outer-approximating polytope $\PP=\{x\in\rr^2 \tq Ax \le b\}$ of $\cB_R\subseteq\rr^2$ with $A\in\{-N,\dots,N\}^{m\times 2}$ and $b\in\rr^m$ should have $N\approx\frac{R}{2}$, implying  that the bound for $\IG(\PP)$ obtained from Theorem~\ref{thm:cook1986sensitivity} will be in the order of $\|\alpha\|_1 R$, so it depends linearly on $R$.
%that the coefficients in the constraint matrix of any such polytope should have   coefficients in the same order of $R$.

%Below, we carry this analysis for  $\cB_R:=B(0,R)$, where $R>0$.

\subsection{A tight upper bound for $\IG(\cB_R)$}
\label{sec:tightBoundB_R}

We first present the following proposition that computes an upper bound on the integrality gap  of the convex MIP associated with $ \cB_R \subseteq \rr^n$.
\begin{prop}\label{prop:Proximity-ball}
    Consider the convex set $\cB_R \subseteq \rr^n$ with $R>0$. Then,
    \[
    \IG(\cB_R) = \min_{x\in \SS \cap \zz^n} \alpha^T x - \min_{x\in \SS } \alpha^T x    \leq  \| \alpha\|_1 .
    \]
    \end{prop}

\begin{proof}
    Suppose that $\alpha\neq0$ (since, otherwise, the statement is trivially true). Note that the optimal solution to the continuous relaxation $\min_{x\in \SS } \alpha^T x$ is $\hat x = \frac{\alpha}{\|\alpha\|_2}R$. Now, we construct an integral solution $x' \in \zz^n$ as follows:
    \[
    x'_j = \begin{cases}
        \lfloor \hat x_j \rfloor & \text{ if }  \hat x_j \ge 0 \\
     \lceil \hat x_j \rceil & \text{ if }  \hat x_j < 0 \\
    \end{cases} \ , \quad j=1,\dots,n.
    \]
    Notice that $\|x'\|_2 \le \|\hat x \|_2 = R$. Since we conclude that  $x' \in \cB_R\cap \zz^n$, we deduce that $x'$ is a feasible solution to the convex IP. Moreover, by construction, we have that $\|x' - \hat x \|_\infty \le 1$. Finally, we obtain that 
    \[
    \IG(\cB_R) \le \alpha^T(x'-\hat x) \le \|\alpha\|_1\|x'-\hat x\|_\infty \le  \|\alpha\|_1,
    \]
    where the second inequality follows due to the Hölder's inequality. 
\end{proof}
    Below, we give an example for which the bound  from Proposition~\ref{prop:Proximity-ball} is tight.

\begin{exm} \label{ex:bound-is-tight}
Consider the convex set $\cB_R \subseteq \rr^n$ with $R = 1-\epsilon$ for some small but positive $\epsilon$, which  contains the origin as the  only integral vector. 
Consider the   convex IP 
$\min\{x_1 \tq {x \in \cB_R\cap\mathbb{Z}^n}\} $ and its continuous relaxation 
$\min\{x_1 \tq {x \in \cB_R }\} $. Note the optimal solution to the convex IP is $x^*=0$ with $\valcip(\cB_R)=0$ while the optimal solution to the continuous relaxation IP is $\hat x=(1-\epsilon)e_1$ with $\valcp(\cB_R)=-(1-\epsilon)$.  Therefore, the integrality gap is computed as $\IG(\cB_R) = 1 - \epsilon$, which converges to the bound $\|e_1\|_\infty=1$  from Proposition~\ref{prop:Proximity-ball} as $\epsilon\to0^+$.

    \end{exm}

\subsection{Upper bounds for $\IG(\cB_R)$ via polyhedral outer approximations in an extended space}
\label{sec:upperBoundB_r-extended}

We now focus on a simple case in which $n=2$ and show that the upper bound obtained for $\IG(\PP)$ for an outer-approximating polytope $\PP$ via Theorem~\ref{thm:cook1986sensitivity} can be much larger than $\IG(\cB_R)$.

The polyhedral outer-approximation of a circle in 2D is a well-studied problem in geometry, and it is proven that any such approximation $\PP$ that guarantees a $(1+\delta)$ accuracy, that is, %$B(0,R)\subseteq P \subseteq (1+\delta) B(0,R)$
$\cB_R \subseteq \PP \subseteq (1+\delta) \cB_R$
should have exponentially many inequalities in the original space~\cite{ball1997elementary,vielma2008lifted,hijazi2014outer,lubin2018polyhedral}.
%{\color{red}also mention that $\delta\le\sqrt{1+1/R^2}-1$ must be selected for our purposes}
However, it is possible to construct a compact-size extended formulation as shown in~\cite{ben2001polyhedral}. 
%Ben-Tal and Nemirovski have shown that there 
Since this formulation contains irrational entries in $A$ and $b$, we cannot directly use it in our analysis, though.
Recently, a \textit{rational} polyhedral approximation is proposed in~\cite{kocuk2021rational} that retains the same approximation accuracy and   size of the extended formulation as in~\cite{ben2001polyhedral}, which we will use in the sequel (note that $\delta\le\sqrt{1+1/R^2}-1$ must be selected for our purposes so that  $\PP\cap \zz^n= \cB_R \cap \zz^n$). % \DM{What are these?} \BK{what are what? $R$ and $\delta$ are mentioned before!}).

As an illustration, we will simplify the setting further and assume that $R=1$. In this case, the outer-approximation procedure proposed in Section~2 of~\cite{kocuk2021rational} gives the following extended formulation (here, $(x_1,x_2)$ are the original variables and $(\xi^j,\eta^j)$, $j=0,1,2$ are the additional variables):
% \begin{align*}
%  x_1 - \xi^0 &\leq 0 \\
%  -x_1 - \xi^0 &\leq 0 \\
%  x_2 - \eta^0 &\leq 0 \\
%  -x_2 - \eta^0 &\leq 0 \\
%  3\xi^0 + 4\eta^0 - 5\xi^1 &\leq 0 \\
%  -3\xi^0 - 4\eta^0 + 5\xi^1 &\leq 0 \\
%  -4\xi^0 + 3\eta^0 - 5\eta^1 &\leq 0 \\
%  4\xi^0 - 3\eta^0 - 5\eta^1 &\leq 0 \\
%  4\xi^1 + 3\eta^1 - 5\xi^2 &\leq 0 \\
%  -4\xi^1 - 3\eta^1 + 5\xi^2 &\leq 0 \\
%  -3\xi^1 + 4\eta^1 - 5\eta^2 &\leq 0 \\
%  3\xi^1 - 4\eta^1 - 5\eta^2 &\leq 0 \\
%  \xi^2 &\leq 1 \\
%  -3\xi^2 + 4\eta^2 &\leq 0
% \end{align*}
% \begin{align*}
%  x_1 - \xi^0 &\leq 0, \ 3\xi^0 + 4\eta^0 - 5\xi^1 \leq 0 , \  4\xi^1 + 3\eta^1 - 5\xi^2 \leq 0 \\
%  -x_1 - \xi^0 &\leq 0, \  -3\xi^0 - 4\eta^0 + 5\xi^1 \leq 0, \ -4\xi^1 - 3\eta^1 + 5\xi^2 \leq 0 \\
%  x_2 - \eta^0 &\leq 0, \  -4\xi^0 + 3\eta^0 - 5\eta^1 \leq 0 , \ -3\xi^1 + 4\eta^1 - 5\eta^2 \leq 0 \\
%  -x_2 - \eta^0 &\leq 0, \  4\xi^0 - 3\eta^0 - 5\eta^1 \leq 0 , \ 3\xi^1 - 4\eta^1 - 5\eta^2 \leq 0 \\
%  \xi^2 &\leq 1 , \ -3\xi^2 + 4\eta^2 \leq 0 .
% \end{align*}
% \begin{align*}
%  x_1 - \xi^0 \leq 0, \
%  -x_1 - \xi^0 \leq 0 ,\
%  x_2 - \eta^0 \leq 0 ,\
%  -x_2 - \eta^0 \leq 0 ,\\
%  3\xi^0 + 4\eta^0 - 5\xi^1 \leq 0, \
%  -3\xi^0 - 4\eta^0 + 5\xi^1 \leq 0 ,\
%  -4\xi^0 + 3\eta^0 - 5\eta^1 \leq 0 ,\
%  4\xi^0 - 3\eta^0 - 5\eta^1 \leq 0 ,\\
%  4\xi^1 + 3\eta^1 - 5\xi^2 \leq 0, \
%  -4\xi^1 - 3\eta^1 + 5\xi^2 \leq 0, \
%  -3\xi^1 + 4\eta^1 - 5\eta^2 \leq 0, \
%  3\xi^1 - 4\eta^1 - 5\eta^2 \leq 0, \\
%  \xi^2 \leq 1 ,\
%  -3\xi^2 + 4\eta^2 \leq 0
% \end{align*}
\begin{align*}
x_1 - \xi^0 &\leq 0, & -x_1 - \xi^0 &\leq 0, & x_2 - \eta^0 &\leq 0, & -x_2 - \eta^0 &\leq 0, \\
3\xi^0 + 4\eta^0 - 5\xi^1 &\leq 0, & -3\xi^0 - 4\eta^0 + 5\xi^1 &\leq 0, & -4\xi^0 + 3\eta^0 - 5\eta^1 &\leq 0, & 4\xi^0 - 3\eta^0 - 5\eta^1 &\leq 0, \\
4\xi^1 + 3\eta^1 - 5\xi^2 &\leq 0, & -4\xi^1 - 3\eta^1 + 5\xi^2 &\leq 0, & -3\xi^1 + 4\eta^1 - 5\eta^2 &\leq 0, & 3\xi^1 - 4\eta^1 - 5\eta^2 &\leq 0, \\
\xi^2 &\leq 1, & -3\xi^2 + 4\eta^2 &\leq 0.
\end{align*}
% \[
% \begin{bmatrix}
% 1 & 0 & -1 & 0 & 0 & 0 & 0 & 0 \\
% -1 & 0 & -1 & 0 & 0 & 0 & 0 & 0 \\
% 0 & 1 & 0 & -1 & 0 & 0 & 0 & 0 \\
% 0 & -1 & 0 & -1 & 0 & 0 & 0 & 0 \\
% 0 & 0 & 3 & 4 & -5 & 0 & 0 & 0 \\
% 0 & 0 & -3 & -4 & 5 & 0 & 0 & 0 \\
% 0 & 0 & -4 & 3 & 0 & -5 & 0 & 0 \\
% 0 & 0 & 4 & -3 & 0 & -5 & 0 & 0 \\
% 0 & 0 & 0 & 0 & 4 & 3 & -5 & 0 \\
% 0 & 0 & 0 & 0 & -4 & -3 & 5 & 0 \\
% 0 & 0 & 0 & 0 & -3 & 4 & 0 & -5 \\
% 0 & 0 & 0 & 0 & 3 & -4 & 0 & -5 \\
% 0 & 0 & 0 & 0 & 0 & 0 & 1 & 0 \\
% 0 & 0 & 0 & 0 & 0 & 0 & -3 & 4
% \end{bmatrix}
% \begin{bmatrix}
% x_1 \\ x_2 \\ \xi^0 \\ \eta^0 \\ \xi^1 \\ \eta^1 \\ \xi^2 \\ \eta^2
% \end{bmatrix}
% \leq
% \begin{bmatrix}
% 0 \\ 0 \\ 0 \\ 0 \\ 0 \\ 0 \\ 0 \\ 0 \\ 0 \\ 0 \\ 0 \\ 0 \\ 1 \\ 0
% \end{bmatrix}
% \]
For this representation, the constraint matrix $A$ is $14\times8$ and  $\Delta_{7}(A)=37500$. Therefore, an upper bound for $\IG(\PP)$ is obtained as $2\times37500\times\|\alpha\|_1=75000\|\alpha\|_1$ due to~\cite{paat2020distances} whereas an upper bound for $\IG(\cB_R)$ is obtained as $\|\alpha\|_1$ due to Proposition~\ref{prop:Proximity-ball}, which is much better. % \BK{We do not own any bound :) - it probably is a known result anyways. We should give a reference to Proposition 3.}.

When we project this polyhedron onto the original variables $(x_1,x_2)$, we obtain the following set of inequalities:
% \begin{align*}
% x_2 &\leq 1 \\
% - x_2 &\leq 1 \\
% -24x_1 - 7x_2 &\leq 25 \\
% -24x_1 + 7x_2 &\leq 25 \\
% 24x_1 - 7x_2 &\leq 25 \\
% 24x_1 + 7x_2 &\leq 25
% \end{align*}
\begin{align*}
x_2 \leq 1, \
- x_2 \leq 1, \
-24x_1 - 7x_2 \leq 25, \
-24x_1 + 7x_2 \leq 25, \
24x_1 - 7x_2 \leq 25, \
24x_1 + 7x_2 \leq 25.
\end{align*}
% \[
% \begin{bmatrix}
% 0 & 1 \\
% 0 & -1 \\
% -24 & -7 \\
% -24 & 7 \\
% 24 & -7 \\
% 24 & 7 \\
% \end{bmatrix}
% \begin{bmatrix}
% x_1 \\
% x_2
% \end{bmatrix}
% \leq
% \begin{bmatrix}
% 1 \\
% 1 \\
% 25 \\
% 25 \\
% 25 \\
% 25 \\
% \end{bmatrix}
% \]
For this representation, the constraint matrix $A$ is $6\times2$ and  $\Delta_{1}(A)=24$. An upper bound for $\IG(\PP)$ is obtained as $2\times24\times\|\alpha\|_1=48\|\alpha\|_1$ due to~\cite{cook1986sensitivity,paat2020distances}, which is again worse than the bound derived in Proposition~\ref{prop:Proximity-ball} as    $\|\alpha\|_1$ due to .

When we repeat the above procedure for $R=2$, we obtain upper bounds of $7.44\times10^{10}\|\alpha\|_1$   and $4680\|\alpha\|_1
$ for the extended formulation and the projection, respectively. However, notice that our bound of  $\|\alpha\|_1$ remains valid for any $R>0$. This experiment clearly shows that the rational polyhedral outer-approximations from~\cite{kocuk2021rational} are not useful for integrality gap calculations as they are.

\begin{rem}\label{rem:KocukIsNotOptimal}
We should point out that the approximations proposed in~\cite{kocuk2021rational} do not aim to minimize $\Delta_{n-1}$ and it is possible to construct other approximations with much better  $\Delta_{n-1}$ values. For instance, for our running example with $n=2$ and $R=1$, the following is a valid outer-approximation:
% \begin{align*}
% x_1 &\leq 1 \\
% - x_1 &\leq 1 \\
% x_2 &\leq 1 \\
% - x_2 &\leq 1 \\
%  x_1 + x_2 &\leq 1/\sqrt{2} \\
%  x_1 - x_2 &\leq 1/\sqrt{2} \\
% - x_1 + x_2 &\leq 1/\sqrt{2} \\
% - x_1 - x_2 &\leq 1/\sqrt{2} .
% \end{align*}
\begin{align*}
x_1 \leq 1, \
- x_1 \leq 1, \
x_2 \leq 1, \
- x_2 \leq 1, \
 x_1 + x_2 \leq \frac{1}{\sqrt{2}}, \
 x_1 - x_2 \leq \frac{1}{\sqrt{2}}, \
- x_1 + x_2 \leq \frac{1}{\sqrt{2}}, \
- x_1 - x_2 \leq \frac{1}{\sqrt{2}} .
\end{align*}
% \[
% \begin{bmatrix}
% 1 & 0 \\
% -1 & 0 \\
% 0 & 1 \\
% 0 & -1 \\
% 1 & 1 \\
% 1 & -1 \\
% -1 & 1 \\
% -1 & -1 \\
% \end{bmatrix}
% \begin{bmatrix}
% x_1 \\
% x_2
% \end{bmatrix}
% \leq
% \begin{bmatrix}
% 1 \\
% 1 \\
% 1 \\
% 1 \\
% \frac{1}{\sqrt{2}} \\
% \frac{1}{\sqrt{2}} \\
% \frac{1}{\sqrt{2}} \\
% \frac{1}{\sqrt{2}} \\
% \end{bmatrix}
% \]
We remind the reader that Theorem~\ref{thm:cook1986sensitivity} only requires the $A$ matrix to be integer (there are no restrictions on the $b$ vector). Clearly, for this outer-approximation, $\Delta_1=1$. Although the bound obtained in this case, which is $2\|\alpha\|_1$, is much better than the other polyhedral outer-approximations constructed above, it is still dominated by the bound obtained in Proposition~\ref{prop:Proximity-ball}.
\end{rem}

\subsection{Upper bounds for $\IG(\cB_R)$ via polyhedral outer approximations on the original space}
\label{sec:upperBoundB_r-original}

The observation in Remark~\ref{rem:KocukIsNotOptimal} leads to the following question: 
Given an integer radius $R>0$, what would be the smallest integrality gap upper bound we can obtain from \textit{any} rational polyhedral outer-approximation $\PP=\{x\in\rr^2 \tq Ax \le b\}$ of $\cB_R$ satisfying $\PP\cap \zz^n= \cB_R \cap \zz^n$? In this section, we prove the following theorem: %In this section, we prove that $\IG(\PP) \le  2 \lceil \frac{R}{2} \rceil\ \|\alpha\|_1$ for any such polyhedron and that this bound cannot be improved significantly. More precisely, we state the following theorem: %\BK{I think Lemma 3 should be Theorem 5 and the current Theorem 5 should be a corollary of (or an additional statement in) Theorem 5. If you want to keep the structure, Lemma 3 should be upgraded to a proposition. In any case, I think that Lemma 3 is more important than the current Theorem 5, in the sense that it says more.}
%We now show that an upper bound on $\IG(\PP)$ obtained for any polytope   $\PP=\{x\in\rr^2 \tq Ax \le b\}$ with $A\in\{-N,\dots,N\}^{m\times 2}$ and $b\in\rr^m$ such that $\PP \cap \zz^2 =  \cB_R \cap \zz^2 $ via Theorem~\ref{thm:cook1986sensitivity} should be linear in $R$.  
% \begin{thm}\label{thm:noBoundedCoeffPolytopeCanApproxBall}
%  Let $R\in\zz_{++}$. Then, there exists a polytope     $\PP=\{x\in\rr^2 \tq Ax \le b\}$ with $A\in\zz^{m\times 2}$ and $b\in\rr^m$, $\PP \cap \zz^2 =  \cB_R \cap \zz^2 $ such that  $\IG(\PP) \le  2 \lceil \frac{R}{2} \rceil\ \|\alpha\|_1$. 
% \end{thm}
\begin{thm}\label{thm:noBoundedCoeffPolytopeCanApproxBall}
 Let $R\in\zz_{++}$ and let $\PP=\{x\in\rr^2 \tq Ax \le b\}$ with $A\in\zz^{m\times 2}$ and $b\in\rr^m$. Then, there exists a matrix $A$ such that $\cB_R\subseteq\PP$, $\PP \cap \zz^2 =  \cB_R \cap \zz^2$ and $\Delta_1(A)= \lceil \frac{R}{2} \rceil$, and thus, $\IG(\cB_R)\leq \IG(\PP) \le  2 \lceil \frac{R}{2} \rceil\ \|\alpha\|_1$.  Moreover, if $\Delta_1(A)<\lceil \frac{R}{2} \rceil$, we have that $\PP \cap \zz^2 \neq  \cB_R \cap \zz^2$.
\end{thm}
%Before proving this theorem, 
In order to prove this theorem, there are two related questions we need to answer:   Given a polyhedral outer-approximation $\PP$ of $\cB_R $ in which  the largest coefficient in~$A$ is at most~$N$, that is, $\Delta_1(A)\leq N$:
i)   what would be the smallest~$N$ such that the outer-approximation~$\PP$
satisfies $\PP\cap \zz^n= \cB_R \cap \zz^n$, and
ii)   what would be the largest~$N$ such that the outer-approximation~$\PP$
does not satisfy $\PP\cap \zz^n= \cB_R \cap \zz^n$?  
%We need the following lemma:
%Before proving this theorem, we need the following lemma
%
%
% \BK{Explain why we care about size of coefficients in the original space.}
 We answer both of these questions in the following proposition:
\begin{prop}\label{prop:PQ_RNpolytopeProperty}
Let $R, N\in\zz_{++}$ 
\begin{enumerate}
    \item  
%{\color{red}Actually, a conjecture. }
%Let $R, N\in\zz_{++}$.
   Consider the polytope $\PP_{R,N}$ defined as
   \begin{equation}\label{eq:define-Q_RNpolytope}
\PP_{R,N} := \left\{ x\in\rr^2  \tq  a_1 x_1 + a_2x_2 \le R\|a\|_2, \ a\in\{-N,\dots,N\}^2 \textup{ s.t. either $|a_1|=N$ or $|a_2|=N$}\right\}.       
   \end{equation}
 Then, for $N \ge \lceil\frac{R}{2} \rceil $, we have that  $\PP_{R,N} \cap \zz^2 =  \cB_R \cap \zz^2 $. 
    \item Consider the polytope $\QQ_{R,N}$ defined as
   \begin{equation*}\label{eq:define-P_RNpolytope}
\QQ_{R,N} := \left\{ x\in\rr^2  \tq  a_1 x_1 + a_2x_2 \le R\|a\|_2, \ a\in\{-N,\dots,N\}^2 \right\}.       
   \end{equation*}
 Then, for $N \le \lfloor \frac{R}{2} - \frac{1}{2R} \rfloor  $, we have that  $\QQ_{R,N} \cap \zz^2 \neq  \cB_R \cap \zz^2 $. 
\end{enumerate}
\end{prop}
\begin{proof}\leavevmode
\begin{enumerate}
    \item 
%\BK{the nice thing about this polytope is that we precisely know the order of the ``inequalities" and which ones intersect at an extreme point. therefore, we no longer need the Farey sequence business.}
%
Notice that all the inequalities defining $\PP_{R,N} $ are supporting hyperplanes for the ball $\cB_R$. Therefore, $\cB_R \subseteq \PP_{R,N}$. 
Our proof strategy to prove the assertion of this lemma is to characterize all the extreme points of the polytope $\PP_{R,N}$ and show that their norms are less than $\sqrt{R^2+1}$ if $N \ge \lceil\frac{R}{2} \rceil $. This will guarantee that $\PP_{R,N} \cap \zz^2 =  \cB_R \cap \zz^2 $ and the result will follow.

Since the polytope $\PP_{R,N}$ is symmetric with respect to the origin, we will only consider the extreme points in $\RR_+^2$. In addition, the polytope is also symmetric with respect to the line $x_1=x_2$, therefore, we will only consider the extreme point in $\RR_+^2$ such that $x_1 \ge x_2$. Note that the inequalities intersecting in this region have coefficients of the form $(N,k)$, $k=0,\dots,N$. In fact, it is easy to see geometrically that the extreme points 
%Note that these extreme points
are located at the intersection of the inequalities $Nx_1+kx_2=R\sqrt{N^2+k^2}$ and $Nx_1+(k+1)x_2=R\sqrt{N^2+(k+1)^2}$, for $k=0,\dots,N-1$, and given as
\[
\hat x_1^{(k)}=\frac{R}{N}\left(
(k+1)\sqrt{N^2+k^2}-k\sqrt{N^2+(k+1)^2}\right), \ 
\hat x_2^{(k)} = R \left( \sqrt{N^2+(k+1)^2} - \sqrt{N^2+k^2} \right).
\]
A straightforward calculation yields that the squared Euclidean  norm of this point is calculated as 
$$
\|\hat x^{(k)}\|^2 = 2\frac{R^2}{N^2} \left [ (N^2+k^2)(N^2+(k+1)^2)-(N^2+k(k+1) )\sqrt{N^2+k^2}\sqrt{N^2+(k+1)^2}  \right].
$$
 We now claim that $
\|\hat x^{(k)}\|^2$ is decreasing in $k$, therefore, the largest size is obtained at $k=0$.  
 To simplify the notation, let us define 
 $a_k =N^2+k^2 $, $b_k=N^2+(k+1)^2$ and $c_k=N^2+k(k+1) $. Then, we have $a_k b_k-c_k^2=N^2$, and we obtain
 \begin{equation*}
     \begin{split}
\|\hat x^{(k)}\|^2 
&= 2\frac{R^2}{N^2} \left [ a_k b_k - \sqrt{a_kb_k-N^2} \sqrt{a_kb_k } \right ] 
= 2\frac{R^2}{N^2} \frac{ a_k^2 b_k^2 - ({a_kb_k-N^2}) {a_kb_k } }{a_k b_k + \sqrt{a_kb_k-N^2} \sqrt{a_kb_k } }\\
&= 2{R^2} \frac{  {a_kb_k } }{a_k b_k + \sqrt{a_kb_k-N^2} \sqrt{a_kb_k } }
= 2{R^2} \frac{  1 }{1 + \sqrt{1-\frac{N^2}{{a_kb_k }}}  } .
     \end{split}
 \end{equation*}
Since $a_kb_k$ is increasing in $k$, we conclude that $\|\hat x^{(k)} \|^2 $ is decreasing in $k$.

 Hence, we deduce that one of the ``largest" extreme points (in size) of $\PP_{R,N}$ is at the intersection of $x_1=R$ and $Nx_1+x_2=R\sqrt{N^2+1}$. We compute this extreme point as
 \[
 x^{(0)}=(R, R(\sqrt{N^2+1}-N)),
 \]
 whose squared Euclidean norm is ${R^2 + R^2(\sqrt{N^2+1}-N)^2}$. In order to make sure that $\PP_{R,N} \cap \zz^2 =  \cB_R \cap \zz^2 $, we need to guarantee that this quantity  is less than ${R^2+1}$. In other words, we should have
 \[
 R^2(\sqrt{N^2+1}-N)^2 < 1 \iff \sqrt{N^2+1}-N \le \frac{1}{R} \iff
 \frac1{\sqrt{N^2+1}+N} \le \frac{1}{R}  \iff R \le N + \sqrt{N^2+1}.
 \]
 Notice that by choosing $N \ge \lceil\frac{R}{2} \rceil $, we immediately satisfy this condition. Hence, the result follows. 

\item
Notice that all the inequalities defining $\QQ_{R,N} $ are supporting hyperplanes for the ball $\cB_R$. Therefore, $\cB_R \subseteq \QQ_{R,N}$. 
    We will now prove that for any positive integer $N$ such that $N \le \lfloor \frac{R}{2} - \frac{1}{2R} \rfloor  $, the integral point $x=(R,1)\in \QQ_{R,N} \setminus  \cB_R $. 
    For this purpose, we will show that
    \[
    a_1 R + a_2 \le R \|a\|_2 , \ \ a\in \{-N,\dots,N\}^2 .
    \]
    Let us first look at cases where $a\in\zz_+^2$.
    \begin{enumerate}
        \item  Consider $(a_1,a_2)=(N,1)$. Observe that we have
    \[
    N R + 1 \le R \sqrt{N^2+1} \iff 2NR+1\le R^2 \Leftarrow N \le \left\lfloor \frac{R}{2} - \frac{1}{2R} \right\rfloor,
    \]
    therefore, the  point $(R,1)$ satisfies the inequality with $(a_1,a_2)=(N,1)$.
    \item Consider any inequality defined by $(a_1,a_2)$ such that $a_1\ge0$ and $a_2 > 0$. Observe that we have
    \[
    a_1 R + a_2 \le R \sqrt{a_1^2+a_2^2} \iff 2\frac{a_1}{a_2}R+1\le R^2 \Leftarrow 2NR+1\le R^2,
    \]
    where the last implication follows due to Item (a).
    \item Consider the inequality defined by $(1,0)$, which is equivalent to $  x_1 \le R$. Notice that this inequality is satisfied by the point $(R,1)$.
    \end{enumerate}
    Now,  
        consider any inequality defined by $(a_1,a_2)$ such that either $a_1 \le  0$ or $a_2 \le 0$. Since the point $(R,1)$ satisfies the inequality with $(|a_1|,|a_2|)$ due to Items (a)-(c), and we have 
        \[
        a_1 R + a_2 \le R \sqrt{a_1^2+a_2^2}   \Leftarrow |a_1| R + |a_2| \le R ,
        \]  
        the result follows. 
\end{enumerate}
\end{proof}

% \BK{Farey sequence etc...
% $a x_1 + b x_2 = R \sqrt{a^2+b^2} $
% $c x_1 + d x_2 = R \sqrt{c^2+d^2} $

% $x_1 = R ( -d\sqrt{a^2+b^2} + b\sqrt{c^2+d^2} ) $
% $x_2 = R (  c\sqrt{a^2+b^2} - a\sqrt{c^2+d^2} ) $

%  $\|x\|^2 < R^2+1 $

% $2 R^2 [ (a^2+b^2)(c^2+d^2) - (ac+bd)\sqrt{a^2+b^2}\sqrt{c^2+d^2}  ] $

% $2 R^2 (a^2+b^2)(c^2+d^2) [1- (ac+bd)/(\sqrt{a^2+b^2}\sqrt{c^2+d^2})  ]  $

% $$(a,b) \cdot (c,d)$$

%  $\max  [ (a^2+b^2)(c^2+d^2) - (ac+bd)\sqrt{a^2+b^2}\sqrt{c^2+d^2}  ]
% s.t.  bc-ad=1 \ , 
%       0 \le a,b,c,d \le N (and integer) $
% }

We are now ready to prove Theorem~\ref{thm:noBoundedCoeffPolytopeCanApproxBall}.
\begin{proof}[Proof of Theorem~\ref{thm:noBoundedCoeffPolytopeCanApproxBall}]
    Notice that  due to Proposition~\ref{prop:PQ_RNpolytopeProperty}(i), the polytope $\PP:=\PP_{R,N}$ with    $N =\lceil \frac{R}{2} \rceil  $ is such that $\PP_{R,N} \cap \zz^2 =  \cB_R \cap \zz^2 $ and $\Delta_1=\lceil \frac{R}{2} \rceil$. By computing an upper bound on $\IG(\PP)$ via Theorem~\ref{thm:cook1986sensitivity}, we obtain $ \IG(\PP) \le 2N\|\alpha\|_1 \le 2 \lceil \frac{R}{2} \rceil\|\alpha\|_1$. 
    Now assume that $\Delta_1(A)<\lceil \frac{R}{2} \rceil$. Then, since we have that $\lceil \frac{R}{2} \rceil=\lfloor \frac{R}{2} - \frac{1}{2R} \rfloor+1$ for any integer $R\geq 1$, we obtain by  Proposition~\ref{prop:PQ_RNpolytopeProperty}(ii) that $\PP \cap \zz^2 \neq  \cB_R \cap \zz^2$.
\end{proof}
Note that the upper bound for $\IG(\cB_R)$ obtained in Proposition~\ref{prop:Proximity-ball} is $\|\alpha\|_1$, which is independent of $R$ and significantly better than the bound obtained in Theorem~\ref{thm:noBoundedCoeffPolytopeCanApproxBall}.

\section{Concluding Remarks}

In this paper, we study the integrality gap of convex mixed-integer programs. We study classes of convex sets whose associated optimization problems have finite integrality gap: Dirichlet convex sets, sets with full-dimensional recession cones and sets that can be approximated by polyhedral sets.  We also obtain explicit integrality gap estimations when the feasible region has a full-dimensional recession cone, or it is almost rational polyhedral under mild conditions.
Finally, we explore polyhedral approximations for the feasible region and analyze the possibility of using estimations from the linear mixed-integer  programming literature. However, through a simple example involving the Euclidean ball in $\rr^2$, we show that such estimations can be far off from the integrality gap estimations we derive previously.  

While our results provide new insights into the integrality gap analysis for convex mixed-integer programs, they are not exactly analogous to the results known for linear mixed-integer  programs. For example, in this paper, we have not carried out a \textit{proximity analysis}, that is, we have not quantified 
   the distance between the optimal solutions to the mixed-integer program~\eqref{eq:convexIP} and its continuous relaxation~\eqref{eq:convexP}. This is due to two main reasons: (i.) A general convex mixed-integer program and/or its continuous relaxation may not have an optimal solution, even if the objective function is bounded below on $\SS$, and (ii.) When the feasible region is a general convex set, we do not have the properties of rational polyhedral sets that are very useful in the context of integer programming (for instance, the existence of Hilbert basis for rational polyhedral cones is crucial for Theorem~\ref{thm:cook1986sensitivity}).
   In this respect, we believe that having more structure on the set $\SS$ can lead to stronger results.
A promising future research direction is to study the integrality gap and the proximity of~\eqref{eq:convexIP} for special cases of convex mixed-integer programs, e.g. mixed-integer second-order  cone programs.

% \subsection{Proximity between optimal solutions.}
% \textbf{Proximity}: {\color{red}reword the definition/question and define `solvable' along the way--selection on the norm depends on the type of the proof approach/geometry} Assuming that the continuous relaxation~\eqref{eq:convexP} is solvable, how to find the closest integral vector in $\SS$ to a given optimal solution $\hat x$:
%  $$  \Prox_{\hat x }(\SS) = \min_{x \in \SS \cap \zz^n} \|  x - \hat x \|   .$$

% We note that there is a subtle difference between our results on proximity and the one in~\cite{cook1986sensitivity}: we consider  the distance of the \textit{optimal} solution of the continuous relaxation~\eqref{eq:convexP} to the closest \textit{feasible} solution of convex IP~\eqref{eq:convexIP} whereas the result in~\cite{cook1986sensitivity} involves the distance between the \textit{optimal} solutions of~\eqref{eq:convexP} and~\eqref{eq:convexIP}. {\color{red}[reword correctly]However, it is easy to see that any proximity bound that we give can be used to find a bound between optimal solutions, and vice versa.} We also remark that both proximity results lead to straightforward bounds for the integrality gap.
%which quantifies the distance of the continuous optimal solution to the closest integer feasible solution, and the definition used in, which quantifies the distance of the continuous optimal and integer optimal solutions. However, in some sense, both proximity definitions are equivalent: A bound on our definition of proximity gives an upper bound on proximity in the Cook et al. sense. 

\section*{Statements and Declarations}

\subsection*{Competing interests}
The authors declare that they have no competing interests.

\subsection*{Funding}
 We would  like to thank for the support from the  ANID grant  Fondecyt \# 1210348 and TUBITAK visiting researcher grant under Program 2221.

% \iffalse
% \section*{Declarations}

% \subsection*{Ethics approval and consent to participate}
% Not applicable

% \subsection*{Consent for publication}
% Not applicable

% \subsection*{Funding}

% The Scientific and Technological Research Council of Turkey  (project number: 119M855).

\subsection*{Availability of data and materials}
Our manuscript does not contain any associated data.

% \subsection*{Competing interests}
% The authors declare that they have no competing interests.

% \subsection*{Authors' contributions}
% M.J. worked on the methodology, implementation, visualization, analysis, writing and B.K. worked on the methodology, analysis, writing.

% \subsection*{Acknowledgements}
% The authors thank The Scientific and Technological Research Council of Turkey (TÜBİTAK) for supporting this work (project number: 119M855).
% \fi

\bibliographystyle{plain}
\bibliography{references}

\begin{thebibliography}{10}

\bibitem{temitayoOR2020}
T.~Ajayi, C.~Thomas, and A.~J. Schaefer.
\newblock The gap function: Evaluating integer programming models over multiple
  right-hand sides.
\newblock {\em Operations Research}, 70(2):1259--1270, 2022.

\bibitem{aliev2017integrality}
I.~Aliev, M.~Henk, and T.~Oertel.
\newblock Integrality gaps of integer knapsack problems.
\newblock In {\em International Conference on Integer Programming and
  Combinatorial Optimization}, pages 25--38. Springer, 2017.

\bibitem{IG_facilitylocation2017}
H.-C. An, M.~Singh, and O.~Svensson.
\newblock {LP}-based algorithms for capacitated facility location.
\newblock {\em SIAM Journal on Computing}, 46(1):272--306, 2017.

\bibitem{Atamturk12}
A.~Atamt\"{u}rk, G.~Berenguer, and Z.~Shen.
\newblock A conic integer programming approach to stochastic joint
  location-inventory problems.
\newblock {\em Oper. Res.}, 60(2):366--381, March 2012.

\bibitem{ball1997elementary}
K.~Ball et~al.
\newblock An elementary introduction to modern convex geometry.
\newblock {\em Flavors of geometry}, 31(1-58):26, 1997.

\bibitem{BenTal01}
A.~Ben-Tal and A.~Nemirovski.
\newblock {\em Lectures on Modern Convex Optimization}.
\newblock Society for Industrial and Applied Mathematics, 2001.

\bibitem{ben2001polyhedral}
A.~Ben-Tal and A.~Nemirovski.
\newblock On polyhedral approximations of the second-order cone.
\newblock {\em Mathematics of Operations Research}, 26(2):193--205, 2001.

\bibitem{Bertsimas_Weismantel_2005}
D.~Bertsimas and R.~Weismantel.
\newblock {\em Optimization over integers}, volume~13.
\newblock Dynamic Ideas, 2005.

\bibitem{blair1977value}
C.~E. Blair and R.~G. Jeroslow.
\newblock The value function of a mixed integer program: {I}.
\newblock {\em Discrete Mathematics}, 19(2):121--138, 1977.

\bibitem{blair1979valueII}
C.E. Blair and R.G. Jeroslow.
\newblock The value function of a mixed integer program: {II}.
\newblock {\em Discrete Mathematics}, 25(1):7--19, 1979.

\bibitem{borst2023integrality}
S.~Borst, D.~Dadush, and D.~Mikulincer.
\newblock Integrality gaps for random integer programs via discrepancy.
\newblock In {\em Proceedings of the 2023 Annual ACM-SIAM Symposium on Discrete
  Algorithms (SODA)}, pages 1692--1733. SIAM, 2023.

\bibitem{Brandenberg}
R.~Brandenberg and L.~Roth.
\newblock New algorithms for k-center and extensions.
\newblock In B.~Yang, D.~Du, and C.~Wang, editors, {\em Combinatorial
  Optimization and Applications}, volume 5165 of {\em Lecture Notes in Computer
  Science}, pages 64--78. Springer Berlin Heidelberg, 2008.

\bibitem{IG_network_cover2000}
R.~D. Carr, L.~K. Fleischer, V.~J. Leung, and C.~A. Phillips.
\newblock Strengthening integrality gaps for capacitated network design and
  covering problems.
\newblock In {\em Proceedings of the Eleventh Annual ACM-SIAM Symposium on
  Discrete Algorithms}, SODA '00, page 106–115, USA, 2000. Society for
  Industrial and Applied Mathematics.

\bibitem{celaya2024proximity}
M.~Celaya, S.~Kuhlmann, J.~Paat, and R.~Weismantel.
\newblock Proximity and flatness bounds for linear integer optimization.
\newblock {\em Mathematics of Operations Research}, 49(4):2446--2467, 2024.

\bibitem{cook1986sensitivity}
W.~Cook, A.~M.~H. Gerards, A.~Schrijver, and {\'E}.~Tardos.
\newblock Sensitivity theorems in integer linear programming.
\newblock {\em Mathematical Programming}, 34(3):251--264, 1986.

\bibitem{Dai}
R.~Dai and M.~Mesbahi.
\newblock Optimal topology design for dynamic networks.
\newblock In {\em Decision and Control and European Control Conference
  (CDC-ECC), 2011 50th IEEE Conference on}, pages 1280--1285, Dec 2011.

\bibitem{del2022proximity}
A.~Del~Pia and M.~Ma.
\newblock Proximity in concave integer quadratic programming.
\newblock {\em Mathematical Programming}, 194(1):871--900, 2022.

\bibitem{DM2013}
S.~S. Dey and D.~A. Mor{\'{a}}n~R.
\newblock Some properties of convex hulls of integer points contained in
  general convex sets.
\newblock {\em Mathematical Programming}, 141(1):507--526, Oct 2013.

\bibitem{eisenbrand2019proximity}
F.~Eisenbrand and R.~Weismantel.
\newblock Proximity results and faster algorithms for integer programming using
  the steinitz lemma.
\newblock {\em ACM Transactions on Algorithms (TALG)}, 16(1):1--14, 2019.

\bibitem{granot1990some}
F.~Granot and J.~Skorin-Kapov.
\newblock Some proximity and sensitivity results in quadratic integer
  programming.
\newblock {\em Mathematical Programming}, 47(1):259--268, 1990.

\bibitem{hijazi2014outer}
H.~Hijazi, P.~Bonami, and A.~Ouorou.
\newblock An outer-inner approximation for separable mixed-integer nonlinear
  programs.
\newblock {\em INFORMS Journal on Computing}, 26(1):31--44, 2014.

\bibitem{hochbaum1990convex}
D.~S. Hochbaum and J.~G. Shanthikumar.
\newblock Convex separable optimization is not much harder than linear
  optimization.
\newblock {\em Journal of the ACM (JACM)}, 37(4):843--862, 1990.

\bibitem{karlin2011integrality}
A.~R. Karlin, C.~Mathieu, and C.~T. Nguyen.
\newblock Integrality gaps of linear and semi-definite programming relaxations
  for knapsack.
\newblock In {\em International Conference on Integer Programming and
  Combinatorial Optimization}, pages 301--314. Springer, 2011.

\bibitem{kayacik2020misocp}
S.~E. Kayac{\i}k and B.~Kocuk.
\newblock An {MISOCP}-based solution approach to the reactive optimal power
  flow problem.
\newblock {\em IEEE Transactions on Power Systems}, 36(1):529--532, 2020.

\bibitem{Kocuk2021}
B.~Kocuk.
\newblock Conic reformulations for {K}ullback-{L}eibler divergence constrained
  distributionally robust optimization and applications.
\newblock {\em An International Journal of Optimization and Control: Theories
  \& Applications}, 11(2):139--151, 2021.

\bibitem{kocuk2021rational}
B.~Kocuk.
\newblock Rational polyhedral outer-approximations of the second-order cone.
\newblock {\em Discrete Optimization}, 40:100643, 2021.

\bibitem{Kocuk2015}
B.~Kocuk, S.~S. Dey, and X.~A. Sun.
\newblock New formulation and strong {MISOCP} relaxations for {AC} optimal
  transmission switching problem.
\newblock {\em IEEE Transactions on Power Systems}, 32(6):4161--4170, 2017.

\bibitem{kocuk2019subadditive}
B.~Kocuk and D.~A. Mor{\'{a}}n~R.
\newblock On subadditive duality for conic mixed-integer programs.
\newblock {\em SIAM Journal on Optimization}, 29(3):2320--2336, 2019.

\bibitem{koeppeIGbound2026}
M.~K\"{o}ppe, M.~Reyes~Rivas, and L.~Xu.
\newblock Determining sharp proximity bounds for low row rank and
  $\delta$-modularity.
\newblock {\em INFORMS Journal on Optimization}, 0(0):null, 0.

\bibitem{lee2020improving}
J.~Lee, J.~Paat, I.~Stallknecht, and L.~Xu.
\newblock Improving proximity bounds using sparsity.
\newblock In {\em Combinatorial Optimization: 6th International Symposium, ISCO
  2020, Montreal, QC, Canada, May 4--6, 2020, Revised Selected Papers 6}, pages
  115--127. Springer, 2020.

\bibitem{lubin2018polyhedral}
M.~Lubin, E.~Yamangil, R.~Bent, and J.~P. Vielma.
\newblock Polyhedral approximation in mixed-integer convex optimization.
\newblock {\em Mathematical Programming}, 172(1):139--168, 2018.

\bibitem{DM2011}
D.~A. Mor{\'{a}}n~R. and S.~S. Dey.
\newblock On maximal {S}-free convex sets.
\newblock {\em SIAM Journal on Discrete Mathematics}, 25(1):379--393, 2011.

\bibitem{DM2016}
D.~A. Mor{\'{a}}n~R. and S.~S. Dey.
\newblock Closedness of integer hulls of simple conic sets.
\newblock {\em SIAM Journal on Discrete Mathematics}, 30(1):70--99, 2016.

\bibitem{Moran}
D.~A. Mor{\'{a}}n~R., S.~S. Dey, and J.~P. Vielma.
\newblock A strong dual for conic mixed-integer programs.
\newblock {\em SIAM Journal on Optimization}, 22(3):1136--1150, 2012.

\bibitem{paat2020distances}
J.~Paat, R.~Weismantel, and S.~Weltge.
\newblock Distances between optimal solutions of mixed-integer programs.
\newblock {\em Mathematical Programming}, 179(1):455--468, 2020.

\bibitem{Pinar13}
M.~\c{C}. P{\i}nar.
\newblock Mixed-integer second-order cone programming for lower hedging of
  {A}merican contingent claims in incomplete markets.
\newblock {\em Optimization Letters}, 7(1):63--78, 2013.

\bibitem{rockafellar1970}
G.~T. Rockafeller.
\newblock {\em Convex Analysis}.
\newblock Princeton University Press, New Jersey, NJ, 1970.

\bibitem{santana2017some}
A.~Santana and S.~S. Dey.
\newblock Some cut-generating functions for second-order conic sets.
\newblock {\em Discrete Optimization}, 24:51--65, 2017.

\bibitem{stein2016error}
O.~Stein.
\newblock Error bounds for mixed integer nonlinear optimization problems.
\newblock {\em Optimization Letters}, 10(6):1153--1168, 2016.

\bibitem{tuncer2022misocp}
D.~Tuncer and B.~Kocuk.
\newblock An {MISOCP}-based decomposition approach for the unit commitment
  problem with {AC} power flows.
\newblock {\em IEEE Transactions on Power Systems}, 38(4):3388--3400, 2023.

\bibitem{IGscheduling2011}
J.~Verschae and A.~Wiese.
\newblock On the configuration-lp for scheduling on unrelated machines.
\newblock In C.~Demetrescu and M.~Halld{\'o}rsson, editors, {\em Algorithms --
  ESA 2011}, pages 530--542, Berlin, Heidelberg, 2011. Springer Berlin
  Heidelberg.

\bibitem{vielma2008lifted}
J.~P. Vielma, S.~Ahmed, and G.~L. Nemhauser.
\newblock A lifted linear programming branch-and-bound algorithm for
  mixed-integer conic quadratic programs.
\newblock {\em INFORMS Journal on Computing}, 20(3):438--450, 2008.

\bibitem{werman1991relationship}
M.~Werman and D.~Magagnosc.
\newblock The relationship between integer and real solutions of constrained
  convex programming.
\newblock {\em Mathematical Programming}, 51(1):133--135, 1991.

\end{thebibliography}

\end{document}